\newtheorem{Exam.}{Exam.}
\newtheorem{theorem}{Theorem}[section]
\newtheorem{lemma}{Lemma}[section]
\newtheorem{proposition}{Proposition}[section]
\newtheorem{corollary}{Corollary}[section]
\newtheorem{Conjecture}{Conjecture}[section]
\theoremstyle{definition}
\newtheorem{remark}{Remark}
\numberwithin{equation}{section}
\def\det{\operatorname{det}}
\def\N{\mathbb N}
\def\R{\mathbb R}
\def\x{{\bf x}}
\DeclareMathOperator{\JS}{JS}
\DeclareMathOperator{\js}{Jc}
\begin{document}

\title[Positivity properties of two combinatorial sequences]{Positivity properties of Jacobi-Stirling numbers \\
and  generalized Ramanujan polynomials}

\author{Zhicong Lin}
\address[Zhicong Lin]{Department of Mathematics and Statistics, Lanzhou University, China, and  Universit\'{e} de Lyon; Universit\'{e} Lyon 1; Institut Camille Jordan; UMR 5208 du CNRS; 43, boulevard du 11 novembre 1918, F-69622 Villeurbanne Cedex, France}
\email{lin@math.univ-lyon1.fr}

\author{Jiang Zeng}
\address[Jiang Zeng]{Universit\'{e} de Lyon; Universit\'{e} Lyon 1; Institut Camille Jordan; UMR 5208 du CNRS; 43, boulevard du 11 novembre 1918, F-69622 Villeurbanne Cedex, France}
\email{zeng@math.univ-lyon1.fr}

\date{\today}
\begin{abstract} 
Generalizing recent results of  Egge and  Mongelli, we   show that 
    each  diagonal  sequence of the Jacobi-Stirling numbers  $\js(n,k;z)$ and $\JS(n,k;z)$ is a P\'olya frequency sequence   if and only if  $z\in [-1, 1]$ and 
    study  the $z$-total positivity properties of 
 these numbers. 
    Moreover, the polynomial  sequences 
    $$\biggl\{\sum_{k=0}^n\JS(n,k;z)y^k\biggr\}_{n\geq 0}\quad \text{and} \quad 
   \biggl\{\sum_{k=0}^n\js(n,k;z)y^k\biggr\}_{n\geq 0}$$
     are proved to be strongly $\{z,y\}$-log-convex.
  In the same vein, 
  we extend  a recent result of Chen et al.  about the  Ramanujan polynomials to 
Chapoton's 
generalized Ramanujan polynomials.
 Finally,  bridging   the  Ramanujan polynomials and 
a sequence arising from the Lambert $W$ function, we obtain
 a neat proof of  
 the unimodality of the latter sequence, which was proved  previously by Kalugin and Jeffrey.
\end{abstract}

\subjclass[2000]{05A15, 05A20, 12D10}

\keywords{positivity properties, Jacobi-Stirling numbers, Legendre-Stirling numbers, Stirling numbers, generalized Ramanujan polynomials, Lambert $W$ function}
\maketitle


\section{Introduction}
The {\em Jacobi-Stirling numbers}  of the first kind  $\js(n,k;z)$  and of  the second kind $\JS(n,k;z)$ ($n\geq k\geq0$) are defined by
the recurrence relations:
\begin{align}
&\js(n,k;z)=\js(n-1,k-1;z)+(n-1)(n-1+z)\js(n-1,k;z)\label{eq:ja0},\\
&\JS(n,k;z)=\JS(n-1,k-1;z)+k(k+z)\JS(n-1,k;z)\label{eq:ja},
\end{align}
with the  boundary conditions $\JS(0,0;z)=\js(0,0;z)=1$ and $\JS(j,0;z)=\JS(0,j;z)=\js(j,0;z)=\js(0,j;z)=0$ for $j\geq1$.
The first values of these two sequences are given in Tables~\ref{Lnkz} 
and~\ref{jsnkz}.
When $z=1$, the two kinds of Jacobi-Stirling numbers are called 
the (unsigned) \emph{Legendre-Stirling numbers} of
the first and second kinds \cite{al,AGL11}. 

 Recently,  these numbers 
have attracted the attention of several authors~\cite{al,AGL11,gz,aegl,eg,GLZ,mo,mon2}.
In particular, a result of Egge~\cite[Theorem~5.1]{eg}  implies 
  that the diagonal sequences 
  $$
  \{\JS(k+n,n;1)\}_{n\geq 0}\quad\textrm{and} \quad
  \{\js(k+n,n;1)\}_{n\geq k}
  $$
   are P\'olya frequency sequences for any fixed $k\in \N$, while 
   Mongelli~\cite{mo} 
 studied total positivity properties of Jacobi-Stirling numbers  assuming that 
  $z$ is a real number. 
  
  \begin{table}
\caption{The first values of $\JS(n,k;z)$}
\label{Lnkz}
\begin{tiny} \[ \begin{tabular}{c|cccccc}
$k\backslash n$ & $1$ & $2$ & $3$ & $4$ & $5$  & $6$
\\
\hline
$1$ & $1$&$z+1$&$(z+1)^2$& $(z+1)^3$  &$(z+1)^4$&$(z+1)^5$ \\
$2$ && $1$& $5+3z$&$21+24z+7z^2$  &$85+141z+79z^2+15z^3$&$341+738z+604z^2+222z^3+31z^4$   \\
$3$ & &&$1$ &$14+6z$&$147+120z+25z^2$ &$1408+1662z+664z^2+90z^3$   \\
$4$ & &&&$1$ &  $30+10z$ &$627+400z+65z^2$                \\
$5$ & &&&&$1$  &$55+15z$        \\
$6$ & & &&&&$1$ \\
 \end{tabular} \] \end{tiny} \end{table}

  It is convenient to recall some necessary definitions.
A sequence of nonnegative real numbers 
$\{a_n\}_{n\geq0}$
 is  \emph{unimodal} if $a_0\leq\cdots\leq a_{m-1}\leq a_m\geq a_{m+1}\geq\cdots$ for 
 some $m$, and is \emph{log-concave} (resp.~\emph{log-convex}) if $a_i^2\geq a_{i-1}a_{i+1}$ (resp.~$a_i^2\leq a_{i-1}a_{i+1}$) for all $i\geq 1$.  
 A real sequence $\{a_n\}_{n \geq 0}$ is called a \emph{P\'olya frequency sequence} (PF sequence for short) if the matrix 
  $M:=(a_{j-i})_{i,j\geq 0}$ (where $a_k=0$ if $k<0$)  is \emph{totally positive} 
  (TP for short), that is,  every minor of $M$ is nonnegative. Unimodal, log-concave and P\'olya frequency sequences arise often in combinatorics~\cite{Brenti89}.

The following is our result about diagonal sequences of Jacobi-Stirling numbers.
 \begin{theorem}\label{PF:diagonal}
For any  fixed integer $k\geq1$,  the two sequences $\{\JS(k+n,n;z)\}_{n\geq0}$ and $\{\js(k+n,n;z)\}_{n\geq 0}$ are P\'olya frequency sequences  if and only if $-1\leq z\leq1$.
\end{theorem}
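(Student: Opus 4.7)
The plan is to apply the rational-OGF characterization of P\'olya frequency (a consequence of Aissen-Edrei-Schoenberg-Whitney): a nonnegative sequence with ordinary generating function $P(x)/Q(x)$ in lowest terms is P\'olya frequency if and only if every root of $Q$ is real positive and every root of $P$ is real nonpositive. Iterating \eqref{eq:ja} gives the column generating function $\sum_{n\geq k}\JS(n,k;z)x^n=x^k/\prod_{i=1}^k(1-i(i+z)x)$, whence $\JS(n+k,n;z)=h_k(x_1,\ldots,x_n)$ with $x_i:=i(i+z)$ and $h_k$ the complete homogeneous symmetric polynomial. Likewise, \eqref{eq:ja0} yields $\sum_k\js(n,k;z)t^k=\prod_{i=0}^{n-1}(t+i(i+z))$ and hence $\js(n+k,n;z)=e_k(x_1,\ldots,x_{n+k-1})$. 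Using the operator recurrence $(1-x)D_k(x,z)=\Theta_z D_{k-1}(x,z)$ (with $\Theta_z:=x\partial_x(x\partial_x+z)$ and $D_0(x,z)=1/(1-x)$), which is immediate from the defining recurrence, one derives by induction a closed rational form $D_k(x,z):=\sum_{n\geq 0}\JS(n+k,n;z)x^n=P_k(x,z)/(1-x)^{3k+1}$, with $P_k$ a polynomial of degree $2k$ in $x$, and a similar form for the $\js$-diagonal.

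For sufficiency when $z\in[-1,1]$, the boundary cases reduce to Egge's Legendre-Stirling theorem: $z=1$ is his result directly, while $z=-1$ follows from the identity $\JS(n+k,n;-1)=\JS(n+k-1,n-1;1)$, immediate from $x_1(-1)=0$ and the re-indexing $x_i(-1)=x_{i-1}(1)$ in the $h_k$-formula. For $z\in(-1,1)$ the task is to show $P_k(x,z)$ has only real nonpositive roots in $x$. The base case $k=1$ is explicit, since $P_1(x,z)=x[(1+z)+(1-z)x]$ has its nontrivial root at $-(1+z)/(1-z)\leq 0$. For the inductive step I propose to combine a P\'olya-Schur-type argument---the multiplier sequence $(j(j+z))_{j\geq 0}$ has exponential generating function $xe^x(x+1+z)$ lying in the Laguerre-P\'olya class iff $z\geq -1$---with a continuity argument tracking roots as $z$ varies from Egge's endpoint $z=1$: since $P_k(x,z)/x$ evaluated at $x=0$ equals $(1+z)^k>0$ for $z\in(-1,1]$, no root of $P_k$ can cross the origin as $z$ varies in that range, so the nonpositive-rootedness at $z=1$ propagates. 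The $\js$ case is treated analogously using the operator $\Psi_z:=(x\partial_x+1)(x\partial_x+1+z)$ coming from the elementary-symmetric identity.

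For necessity, when $z<-1$ and $k$ is odd, $\JS(k+1,1;z)=(1+z)^k<0$ immediately rules out PF; when $z<-1$ and $k$ is even, explicit computation via the $h_k$-formula reveals a failure of log-concavity at a small index. When $z>1$, the leading coefficient of $P_k(x,z)$ in $x$ is a polynomial in $z$ that vanishes and changes sign at $z=1$ (for $k=2$ it equals $2(z-1)(z-2)$), so by continuity $P_k$ picks up a positive real root just past $z=1$, violating the Edrei form. The main obstacle is the sufficiency step for $z\in(-1,1)$: while the continuity argument cleanly prevents roots from crossing the origin, ruling out the merging of pairs of real roots into complex-conjugate pairs as $z$ deforms requires either an explicit discriminant calculation or a strengthened P\'olya-Schur-style preservation principle to push the real nonpositive-rootedness through the inductive transformation $P_{k-1}\mapsto P_k$.
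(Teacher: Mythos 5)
Your overall architecture coincides with the paper's: both routes rest on Brenti's rational-generating-function characterization of PF sequences (Lemma~\ref{rfpo}), both establish $\sum_{n\geq0}\JS(k+n,n;z)x^n=A_k(x;z)/(1-x)^{3k+1}$ with a numerator of degree $2k$, and both reduce the $\js$ diagonal to the $\JS$ diagonal (the paper does this via the generating-function identity of Lemma~\ref{js-JS}, which also disposes of the $z<-1$ case of necessity by sending it to $-z>1$). The trouble is that the heart of the theorem --- that for $z\in(-1,1)$ the numerator has only real nonpositive zeros in $x$ --- is precisely the step you leave open. As you concede, a continuity argument in $z$ starting from Egge's endpoint $z=1$ only prevents roots from crossing the origin; it does not prevent two real roots from colliding and leaving the real axis, and the P\'olya--Schur multiplier idea is sketched rather than executed (the sequence $(j(j+z))_{j\geq0}$ naturally acts on the column generating function, whereas what is needed is real-rootedness of the \emph{diagonal} numerator, produced by a different transformation). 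The paper closes this gap with an interlacing induction: it introduces the auxiliary polynomial $B_k(x;z)=(1-x)^{3k+2}x^{1-z}D\bigl(x^z(1-x)^{-1-3k}A_k(x;z)\bigr)$ of degree $2k+1$, shows by Rolle's theorem together with an end-behaviour and leading-coefficient analysis that $B_{k-1}$ inherits $2k-1$ distinct real nonpositive roots from $A_{k-1}$, and then transfers the property to $A_k$ through the inverse relation $A_k(x;z)=x(1-x)^{3k}D\bigl((1-x)^{1-3k}B_{k-1}(x;z)\bigr)$. Without an argument of this kind your sufficiency proof for $-1<z<1$ is incomplete.

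The necessity direction also has genuine holes. For $z>1$, your sign-change argument for the leading coefficient of $P_k$ only produces a positive root for $z$ in a right neighbourhood of $1$: already for $k=2$ the leading coefficient $2(z-1)(z-2)$ becomes positive again for $z>2$, so nothing follows there from continuity past $z=1$. The paper instead starts from the explicit positive root $(z+1)/(z-1)$ of $A_1(x;z)$, valid for every $z>1$, and propagates a positive root through all $k$ by the same Rolle argument applied to the relations \eqref{Bkx} and \eqref{Akx}. For $z<-1$ with $k$ even, the phrase ``explicit computation reveals a failure of log-concavity at a small index'' is an assertion rather than a proof; this case split is avoided entirely in the paper by the reduction of Lemma~\ref{js-JS}. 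Your endpoint observation $\JS(n+k,n;-1)=\JS(n+k-1,n-1;1)$ is correct and is a nice alternative to citing Egge's theorem for both Legendre--Stirling kinds, but it does not compensate for the two gaps above.
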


For a sequence of polynomials in ${\mathbf x}=\{x_1,x_2,\ldots, x_n\}$, 
  one  can define the ${\bf x}$-analog of log-concavity, log-convexity, total positivity and P\'olya frequency sequence as follows (see~\cite{sa,LW10,CWY11}).
Let $\R_{+}=\{x\in\R : x\geq0\}$.
   Given two polynomials $f({\bf x}), g({\bf x})\in \R_+[{\bf x}]$, we define 
$$
f({\bf x})\leq_{{\bf x}}g({\bf x})\quad \text{if and only if}\quad g({\bf x})-f({\bf x})\in \R_{+}[{\bf x}].
$$
 A sequence of polynomials $\{f_k({\bf x})\}_{k\geq0}$ in $\R_+[{\bf x}]$  is called \emph{${\bf x}$-log-concave} if 
$$
f_{k-1}({\bf x})f_{k+1}({\bf x})\leq_{{\bf x}}f_k({\bf x})^2\text{ \, for all $k\geq1$},
$$
and it is \emph{strongly ${\bf x}$-log-concave} if 
$$
f_{k-1}({\bf x})f_{l+1}({\bf x})\leq_{{\bf x}}f_{k}({\bf x})f_{l}({\bf x})\text{ \, for all $l\geq k\geq1$}.
$$
The {\em ${\bf x}$-log-convexity} and {\em strong ${\bf x}$-log-convexity} are defined similarly. 

\begin{remark}
For a sequence of real numbers 
$\{a_n\}_{n\geq0}$, the log-concavity  is equivalent to the strong log-concavity, that is, $a_{k-1}a_{l+1}\leq a_ka_l$ for all $l\geq k\geq1$. But,
for  polynomial sequences, 
  the ${\bf x}$-log-concavity is not equivalent to strong ${\bf x}$-log-concavity(see \cite{sa0}), 
  which is the same for ${\bf x}$-log-convexity and strong ${\bf x}$-log-convexity (see \cite{CWY11}).
\end{remark}

 A matrix $F=(f_{i,j})_{i,j\in\N}$, where $f_{ij}\in \R_+[{\bf x}]$,  is called \emph{${\bf x}$-totally positive} if every minor of $F$ is nonnegative with respect to $\geq_{{\bf x}}$. The \emph{${\bf x}$-P\'olya frequency sequence} is defined similarly. Note that if a sequence $\{f_k({\bf x})\}_{k\geq0}$ is a 
 ${\bf x}$-PF  sequence, then it is strongly ${\bf x}$-log-concave, that is,
$$
\left|\begin{array}{cc}
f_{k}(\x) & f_{l+1}(\x)\\
f_{k-1}(\x) & f_{l}(\x)
\end{array}
\right|\geq_{\x}0.
$$
In particular, we say that 
the finite  sequence $f_0, f_1, \ldots, f_d$
 is unimodal (respectively, log-concave,~etc.),
if the corresponding sequence $\{f_n\}_{n\geq 0}$,  with $f_n=0$ for $n>d$ enjoys the corresponding property.

  \begin{table}
\caption{The first values of $\js(n,k;z)$}
\label{jsnkz}
\begin{tiny} \[ \begin{tabular}{c|ccccc}
$k\backslash n$ & $1$ & $2$ & $3$ & $4$ & $5$
\\
\hline
$1$ & $1$       & $z+1$   & $2z^2+6z+4$    & $6z^3+36z^2+66z+36$ & $24z^4+240z^3+840z^2+1200z+576$
\\ $2$ &  & $1$ & $3z+5$  & $11z^2+48z+49$ & $50z^3+404z^2+1030z+820$
\\ $3$ & & &$1$ & $6z+14$ & $35z^2+200z+273$
 \\$4$ & &&&$1$ & $10z+30$
 \\$5$ & &&&&$1$
\\
 \end{tabular} \] \end{tiny} \end{table}

In this paper we will  prove the following results  about the ${\mathbf x}$-positivity properties of the Jacobi-Stirling numbers.
\begin{theorem}\label{th:ja}
For rows and columns of Jacobi-Stirling numbers, we have
\begin{itemize}
\item[(i)] Fix $n\in \N$, the sequence $\{\JS(n,k;z-1)\}_{k=0}^n$ is strongly $z$-log-concave.
\item[(ii)] Fix  $k\in\N$, the sequence  $\{\JS(n,k;z-1)\}_{n\geq k}$ is a $z$-PF sequence. 
\item[(iii)] Fix  $n\in\N$, the sequence $\{\js(n,k;z-1)\}_{k=1}^{n}$ is a $z$-PF sequence. 
\end{itemize}
\end{theorem}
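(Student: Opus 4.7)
The three parts split by approach. Setting $a_j:=j(j-1+z)\in\R_+[z]$, the substitution $z\mapsto z-1$ turns both recurrences into Pascal-style triangles with nonneg $z$-polynomial weights,
$$\JS(n,k;z-1)=\JS(n-1,k-1;z-1)+a_k\JS(n-1,k;z-1),\qquad \js(n,k;z-1)=\js(n-1,k-1;z-1)+a_{n-1}\js(n-1,k;z-1).$$
Because the $\js$-weight $a_{n-1}$ is independent of $k$, part (iii) follows from an explicit row factorization; the $\JS$-column generating function admits a parallel product form, giving (ii); only (i) needs a genuine induction.

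For (iii), iterating the $\js$-recurrence yields $\sum_{k\ge 1}\js(n,k;z-1)y^k = y\prod_{j=1}^{n-1}(y+a_j)$. Each factor $y+a_j$ has coefficient sequence $(a_j,1)$, which is trivially $z$-PF, and the convolution of $z$-PF sequences is $z$-PF: the Toeplitz matrix of a convolution is the matrix product of the individual Toeplitz matrices, and products of $z$-TP matrices over $(\R_+[z],\geq_z)$ are $z$-TP by Cauchy-Binet. For (ii), regarding the $\JS$-recurrence as a linear recursion in $n$ for fixed $k$ yields $\sum_{n\ge k}\JS(n,k;z-1)x^n = x^k\big/\prod_{j=1}^{k}(1-a_jx)$, and each geometric factor $(1-a_jx)^{-1}$ generates the $z$-PF sequence $(1,a_j,a_j^2,\dots)$; the $x^k$ prefactor has no effect on Toeplitz positivity.

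For (i), I would induct on $n$. Writing $j_m:=\JS(n-1,m;z-1)$ and expanding $\JS(n,k;z-1)$ and $\JS(n,l+1;z-1)$ via the recurrence decomposes
$$\Psi(n,k,l):=\JS(n,k;z-1)\JS(n,l;z-1)-\JS(n,k-1;z-1)\JS(n,l+1;z-1)$$
as $\Psi=\tilde X+\tilde Y$, where $\tilde X:=j_{k-1}\JS(n,l;z-1)-j_l\JS(n,k-1;z-1)$ and $\tilde Y:=a_kj_k\JS(n,l;z-1)-a_{l+1}j_{l+1}\JS(n,k-1;z-1)$. A second application of the recurrence inside $\tilde X$ gives $\tilde X=\Psi(n-1,k-1,l-1)+(a_l-a_{k-1})\,j_{k-1}j_l$, both summands $\geq_z 0$ by IH and by the monotonicity $a_l\geq_z a_{k-1}$ (which follows from $a_m-a_{m-1}=2(m-1)+z$). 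Expanding $\tilde Y$ similarly produces $\tilde Y=E+C$, where $E=a_ka_lj_kj_l-a_{k-1}a_{l+1}j_{k-1}j_{l+1}$ and $C=a_kj_kj_{l-1}-a_{l+1}j_{k-2}j_{l+1}$; combining the rewrite $E=a_{k-1}a_{l+1}\Psi(n-1,k,l)+(a_ka_l-a_{k-1}a_{l+1})\,j_kj_l$ with the explicit identity $a_ka_l-a_{k-1}a_{l+1}=(l-k+1)(z^2+(k+l-2)z+2l(k-1))\geq_z 0$ and the IH yields $E\geq_z 0$.

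The residual $C$ is the main obstacle: a naive pointwise IH bound goes in the wrong direction because $a_k\leq_z a_{l+1}$. I would handle it by strengthening the induction to include an auxiliary positivity statement (most naturally $\tilde Y\geq_z 0$ itself) and closing the two claims jointly, in the spirit of the triangular-recurrence machinery of~\cite{CWY11}; alternatively, the symmetric-function interpretation $\JS(n,k;z-1)=h_{n-k}(a_1,\dots,a_k)$ reduces the assertion to a positivity inequality on products of complete homogeneous symmetric functions with nested variable sets, which should admit a Lindstr\"om-Gessel-Viennot proof after appropriately separating the lattice-path sources. The base cases $k\le 2$ or small $n$ are immediate since $\JS(n-1,k-2;z-1)=0$.
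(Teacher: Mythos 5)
Your treatments of (ii) and (iii) are correct but take a genuinely different route from the paper. Writing $a_j=j(j-1+z)$, you factor the column generating function $\sum_{n\geq k}\JS(n,k;z-1)x^n=x^k/\prod_{j=1}^{k}(1-a_jx)$ and the row generating function $\sum_{k}\js(n,k;z-1)y^k=y\prod_{j=1}^{n-1}(y+a_j)$ into elementary PF building blocks and close with Cauchy--Binet for convolutions of Toeplitz matrices over $\R_+[z]$; this is sound (the infinite sums in Cauchy--Binet are finite because of the triangular support). The paper instead reads $\JS(n,k;z-1)$ and $\js(n,k;z-1)$ as the specializations $h_{n-k}(a_1,\dots,a_k)$ and $e_{n-k}(a_1,\dots,a_{n-1})$ and quotes the $\x$-PF property of $\{h_k(n)\}_{k}$ and $\{e_k(n)\}_{k}$, proved via the Jacobi--Trudi identities and Schur positivity (Lemma~\ref{xPF}), together with a reversal lemma for (iii). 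Your version is more elementary and self-contained; the paper's plugs directly into symmetric-function machinery that it also needs for Theorem~\ref{total:jc}.

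Part (i), however, is not proved. Your induction leaves the residual term $C=a_kj_kj_{l-1}-a_{l+1}j_{k-2}j_{l+1}$ uncontrolled, and as you yourself observe the naive estimate fails because $a_k\leq_z a_{l+1}$ points the wrong way; the two proposed repairs (a strengthened joint induction, or a lattice-path argument for nested complete homogeneous symmetric functions) are only gestured at, not carried out, so (i) remains open in your write-up. The missing ingredient is precisely what the paper invokes: Sagan's inequalities (Lemma~\ref{pr:sy}) stating that for a strongly $q$-log-concave variable sequence one has both $h_{k-1}(n)h_{l+1}(m)\leq_q h_k(n)h_l(m)$ and $h_{k}(n+1)h_{l}(m-1)\leq_q h_k(n)h_l(m)$ for $k\leq l$ and $m\leq n$. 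Chaining these two and specializing $x_i=a_i$ --- whose strong $z$-log-concavity is exactly the explicit computation $a_ka_l-a_{k-1}a_{l+1}\in\N[z]$ that you already performed --- yields (i) immediately, since a row of the $\JS$ table is $\{h_{N-k}(a_1,\dots,a_k)\}_k$, where a unit step in $k$ shifts the degree and the number of variables in opposite directions. So your second ``alternative'' is the right idea, but the required positivity statement is an existing theorem of Sagan rather than something demanding a new analysis; without citing or reproving it (or completing the strengthened induction), part (i) is a genuine gap.
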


It follows from (ii)  and (iii) of Theorem~\ref{th:ja}  that  the sequences $\{\JS(n,k;z-1)\}_{n\geq k}$ and $\{\js(n,k;z-1)\}_{k=1}^{n}$
are strongly $z$-log-concave. As pointed out in~\cite{mo}, the sequence
$\{\js(n,k;z-1)\}_{n\geq k}$ is even not log-concave for real value $z$.

\begin{theorem}\label{total:jc}
The three matrices $(\JS(n,k;z-1))_{n,k\geq0}$,  
 $(\js(n,n-k;z-1))_{n,k\geq0}$ 
 and  $(\js(n,k;z-1))_{n,k\geq0}$ are  $z$-totally positive.
\end{theorem}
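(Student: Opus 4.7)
The plan is to prove each of the three $z$-total-positivity statements by combining the symmetric-function representations of the Jacobi-Stirling numbers with either the Lindstr\"{o}m--Gessel--Viennot (LGV) lemma on weighted lattice paths, or the production-matrix criterion for $z$-total positivity of a recursive array. As a first step, I would verify by a routine induction on $n$, using the recurrences \eqref{eq:ja0} and \eqref{eq:ja} after the substitution $z\mapsto z-1$, the identities
\begin{equation*}
\JS(n,k;z-1)=h_{n-k}(\mu_1,\ldots,\mu_k),\qquad \js(n,k;z-1)=e_{n-k}(\mu_1,\ldots,\mu_{n-1}),
\end{equation*}
where $\mu_i:=i(i-1+z)=i(i-1)+iz\in\R_+[z]$, and $h_m,e_m$ are the complete homogeneous and elementary symmetric polynomials. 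Crucially, every step weight that appears below is a $\mu_i$, hence an element of $\R_+[z]$.

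For the matrix $(\JS(n,k;z-1))$ I would use the production-matrix approach. The column recurrence $\JS(n,k;z-1)=\JS(n-1,k-1;z-1)+k(k-1+z)\JS(n-1,k;z-1)$ shows that the production matrix $P$ of this array is upper bidiagonal, with superdiagonal $1$'s and diagonal entries $k(k-1+z)\in\R_+[z]$. Every minor of such a matrix is either a product of its nonnegative entries or vanishes, so $P$ is $z$-totally positive; the standard theorem (a polynomial refinement of the results discussed in \cite{CWY11} and the references therein) that $z$-total positivity of the production matrix lifts to $z$-total positivity of the array then gives the claim.

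For the two $\js$-matrices I would argue directly with LGV, using paths built from east $(1,0)$ steps of weight $1$ and northeast $(1,1)$ steps carrying weight $\mu_t$ at the transition $x=t-1\to x=t$. Under this weighting, $\js(n,n-k;z-1)=e_k(\mu_1,\ldots,\mu_{n-1})$ is the total weight of paths from $(0,-k)$ to $(n-1,0)$, and $\js(n,k;z-1)=e_{n-k}(\mu_1,\ldots,\mu_{n-1})$ is the total weight of paths from $(0,k)$ to $(n-1,n)$. For a $p\times p$ minor with rows $n_1<\cdots<n_p$ and columns $k_1<\cdots<k_p$, take respectively sources $A_j=(0,-k_j)$ with sinks $B_i=(n_i-1,0)$, or sources $A_j=(0,k_j)$ with sinks $B_i=(n_i-1,n_i)$. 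In each case the sources lie on one ray, the sinks on another, and step monotonicity forces any swapped routing to make the path from the ``inner'' source arrive at the inner sink's $x$-coordinate at a height equal to or exceeding that sink's height, producing either a shared vertex or an unavoidable earlier crossing. Only the identity permutation therefore contributes, and LGV realizes the minor as a sum of products of weights in $\R_+[z]$.

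The main obstacle I anticipate is precisely why LGV is not used for $(\JS(n,k;z-1))$ as well: because the argument list of $h_{n-k}(\mu_1,\ldots,\mu_k)$ has length depending on the column index $k$, any LGV configuration with sources depending only on $j$ and sinks only on $i$ is forced to route paths through heights $y\le 0$, at which $\mu_y$ leaves $\R_+[z]$. Passing through the production matrix cleanly sidesteps this obstruction.
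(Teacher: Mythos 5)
Your argument is essentially correct, but it follows a genuinely different route from the paper. The paper also starts from the specializations \eqref{sJS} and \eqref{sjs}, but then invokes Sagan's theorem \cite[Theorem~5.4]{sa} that the matrices $(e_{j-ri}(li+m))_{i,j}$ and $(h_{j-ri}(li+m))_{i,j}$ are $\x$-totally positive; this handles $(\JS(n,k;z-1))$ and $(\js(n,n-k;z-1))$ in one stroke, and the third matrix is then obtained for free from the inversion relation \eqref{JS:first} together with the fact that the inverse of a totally positive matrix is totally positive up to deletion of signs \cite[Proposition~1.6]{pi}. You instead prove the $h$-case via the production matrix and both $e$-cases via a direct Lindstr\"om--Gessel--Viennot argument; your crossing argument is sound (with $E$ and $NE$ steps the height difference of two paths changes by at most one per unit of $x$, so it cannot pass from negative to nonnegative without vanishing at a lattice point), and your weighting does realize $e_k(\mu_1,\dots,\mu_{n-1})$ with all weights $\mu_i=i(i-1+z)\in\R_+[z]$. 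What your approach buys is self-containedness (no appeal to Jacobi--Trudi or to Schur positivity) and a transparent combinatorial reason for positivity; what the paper's approach buys is brevity and, via the inverse-matrix step, a uniform treatment that avoids a second path model. The one soft spot in your write-up is the attribution of the production-matrix lifting theorem to ``\cite{CWY11} and the references therein'': that paper concerns strong $q$-log-convexity and does not contain this result. The statement you need is nevertheless true and easy to prove in the $z$-positive setting: if $A^{(N)}$ denotes the first $N+1$ rows of the array, then $A^{(N)}$ is obtained by stacking $e_0^T$ on top of $A^{(N-1)}P$, the product is $z$-TP by Cauchy--Binet, and prepending $e_0^T$ preserves $z$-total positivity (any minor using the new row either vanishes or reduces to a minor of the block below). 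You should either supply this short induction or cite a source that actually contains it; with that repaired, your proof is complete.
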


\begin{theorem}\label{bell:JS}  For column generating functions of Jacobi-Stirling numbers, we have
\begin{itemize}
\item[(i)] The polynomial sequence $\left\{\sum_{k=0}^n\JS(n,k;z)y^k\right\}_{n\geq0}$ is 
strongly $\{z,y\}$-log-convex.
\item[(ii)] The polynomial sequence $\left\{\sum_{k=0}^n\js(n,k;z)y^k\right\}_{n\geq0}$ is strongly $\{z,y\}$-log-convex.
\end{itemize}
\end{theorem}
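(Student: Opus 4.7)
I would handle the two parts separately, exploiting the different structure of the two recurrences.

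For part (ii), the coefficient $(n-1)(n-1+z)$ in the $\js$-recurrence depends only on $n$. Writing $\tilde B_n(z,y):=\sum_k\js(n,k;z)y^k$ and summing the recurrence against $y^k$ gives the first-order recurrence $\tilde B_n = (y+(n-1)(n-1+z))\tilde B_{n-1}$, which iterates to the product formula $\tilde B_n(z,y) = \prod_{j=0}^{n-1}(y+j(j+z))$. For $l\geq k\geq 1$, cancellation in this product yields
\[
\tilde B_{k-1}\tilde B_{l+1}-\tilde B_k\tilde B_l = \tilde B_{k-1}\tilde B_l\,[l(l+z)-(k-1)(k-1+z)] = \tilde B_{k-1}\tilde B_l\,(l-k+1)(l+k-1+z),
\]
which manifestly lies in $\R_+[z,y]$, proving strong $\{z,y\}$-log-convexity.

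Part (i) is more subtle because the factor $k(k+z)$ in the $\JS$-recurrence involves the summation index. Writing $B_n(z,y):=\sum_k\JS(n,k;z)y^k$, I get the operator recurrence $B_n=TB_{n-1}$ with $T=yI+L$ and $L:=y^2\partial_y^2+y(z+1)\partial_y$, so that $L(y^k)=k(k+z)y^k$. Using $B_{l+1}=TB_l$ and $B_k=TB_{k-1}$, the $yI$-parts cancel and a direct computation yields the key Wronskian identity
\[
B_{k-1}B_{l+1}-B_kB_l = B_{k-1}(LB_l)-(LB_{k-1})B_l = y^2 W' + (z+1)y\,W,
\]
where $W:=B_{k-1}B_l'-B_lB_{k-1}'$ (so $W'=B_{k-1}B_l''-B_lB_{k-1}''$). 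This is the crucial step.

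The problem thus reduces to showing $W\in\R_+[z,y]$ for $l\geq k\geq 1$. Expanding,
\[
W = \sum_{i<j}(j-i)\bigl[\JS(k-1,i;z)\JS(l,j;z)-\JS(k-1,j;z)\JS(l,i;z)\bigr]\,y^{i+j-1},
\]
whose coefficients are $2\times 2$ minors of the matrix $(\JS(n,i;z))_{n,i\geq 0}$. Theorem~\ref{total:jc} states that $(\JS(n,i;z-1))$ is $z$-totally positive; combined with the elementary observation that $p(z-1)\in\R_+[z]$ implies $p(z)\in\R_+[z]$ (substitute $w=z-1$ and expand $(w+1)^k$), this forces each such minor into $\R_+[z]$, so $W\in\R_+[z,y]$. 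Writing $W=\sum_m w_m(z)y^m$ with $w_m(z)\in\R_+[z]$, we conclude that $y^2W'+(z+1)yW=\sum_m(m+z+1)w_m(z)y^{m+1}$ lies in $\R_+[z,y]$, completing the argument. The main obstacle is spotting the Wronskian factorization; after that, the result funnels cleanly into Theorem~\ref{total:jc}.
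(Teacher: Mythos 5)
Your proof is correct, and your part (ii) coincides with the paper's own argument (the product formula $\sum_k\js(n,k;z)y^k=\prod_{i=0}^{n-1}(y+i(z+i))$ followed by direct cancellation). For part (i) you take a genuinely different route. The paper also cancels the $y$-shift terms in the recurrence and arrives at the coefficient identity $\sum_{i=0}^j[(j-i)(j-i+z)-i(i+z)]\JS(n,j-i;z)\JS(m-1,i;z)$ --- which is precisely the coefficient of $y^j$ in your $B_{m-1}(LB_n)-(LB_{m-1})B_n$ --- but it then feeds this into a general lemma (Lemma~\ref{le: 2}) on triangular recurrences $T_{n,k}=a_{n,k}T_{n-1,k}+b_{n,k}T_{n-1,k-1}$: the required cross-row inequality $T_{m,k}T_{n,l}\geq_{\x}T_{m,l}T_{n,k}$ is derived inductively from the strong $z$-log-concavity of the rows (Theorem~\ref{th:ja}(i)) together with the monotonicity of the coefficients $k(k+z)$. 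You instead repackage the same quantity as $y^2W'+(z+1)yW$ with $W$ the Wronskian, observe that the coefficients of $W$ are positive multiples of $2\times2$ minors of $(\JS(n,k;z))_{n,k}$ taken across rows $m-1<n$, and import their nonnegativity wholesale from Theorem~\ref{total:jc} (your shift $z\mapsto z+1$ to pass from $\R_+[z]$-positivity of $p(z-1)$ to that of $p(z)$ is valid). Both inputs are proved in Section~3 independently of this theorem, so neither route is circular. What the paper's route buys is reusability: Lemma~\ref{le: 2} is applied again verbatim to the generalized Ramanujan polynomials in the proof of Theorem~\ref{convex}, where no analogue of Theorem~\ref{total:jc} is available. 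What your route buys is economy for this particular statement, since the $2\times2$-minor positivity is exactly what total positivity already provides.
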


In this paper we shall also study the ${\mathbf x}$-positivity properties of  a polynomial sequence related to Ramanujan and Lambert.
It is well known that Lambert's equation $we^{-w}=y$ has an explicit solution  $w=\sum_{n\geq 1} n^{n-1}y^n/n!$. 
Note that  the coefficient $n^{n-1}$ is the number of rooted trees on $n$ vertices.
It is also known (see \cite{Zeng99,GZ})  that  the $n$-th derivation (with respect to $y$) of Lambert's function has the following formula
\[
w^{(n)}=\frac{e^{nw}}{(1-w)^n}R_n\left(\frac{1}{1-w}\right),
\]
where $R_n(y)$ are  the so-called Ramanujan polynomials  defined by the recurrence relation 
\begin{align}\label{def:R}
R_1(y)=1,\quad R_{n+1}(y)=n(1+y)R_n(y)+y^2R'_n(y).
\end{align}
The first values of the polynomials $R_n$ are
$$
R_2(y)=1+y,\quad R_3(y)=2+4y+3y^2,\quad R_4(y)=6+18y+25y^2+15y^3.
$$
It is clear that $R_n(y)$ is a polynomial in $y$ of degree $n-1$ with positive integral coefficients such that 
$R_n(0)=(n-1)!$, $R_n(1)=n^{n-1}$ and the coefficient of $y^{n-1}$ is $(2n-3)!!$.  Actually all the coefficients of $R_n(y)$ have nice combinatorial interpretation on trees\cite{Zeng99}. As we will show, the Ramanujan polynomials can be used to give a new  proof  of  a recent unimodal result of   Kalugin and Jeffrey~\cite{KJ10}.
Chapoton (see\cite{GZ})  introduced the  \emph{generalized Ramanujan polynomials} $Q_n(x,y,z,t)$  defined by
\begin{align}\label{def:Q}
Q_1=1,\quad Q_{n+1}=\left[x+nz+(y+t)(n+y\partial_y\right)]Q_n.
\end{align}
For example, we have $Q_2(x,y,z,t)=x+y+z+t,$ and 
\begin{align*}
Q_3(x,y,z,t)=x^2+3xy+3xz+3xt&+3y^2+4yz+5yt+2z^2+4zt+2t^2.
\end{align*}
Clearly,  comparing \eqref{def:R} with \eqref{def:Q} we have
\begin{equation}\label{spec:ramanu:}
R_n(y)=Q_n(0,y,1,0).
\end{equation}
Combinatorial interpretations of $Q_n$ in terms of plane trees and forests are given  in~\cite{GZ} as well as some other remarkable properties. 
Motivated by the recent result of 
Chen et al.~\cite{CWY11}  about
 $\{R_n(y)\}_{n\geq1}$, we 
shall prove the ${\bf x}$-log-convexity of  the polynomials $Q_n$.
\begin{theorem} \label{convex}
The sequence  $\{Q_n(x,y,z,t)\}_{n\geq1}$ is strongly ${\bf x}$-log-convex, that is, for any $n\geq m\geq 2$,
$$Q_{m-1}(x,y,z,t)Q_{n+1}(x,y,z,t)-Q_m(x,y,z,t)Q_n(x,y,z,t)\in\N[x,y,z,t].$$
\end{theorem}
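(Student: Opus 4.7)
The plan is to adapt the strategy of Chen--Wang--Yang \cite{CWY11} for the Ramanujan polynomials to the four-variable setting of $Q_n(x,y,z,t)$. I would first rewrite the recurrence \eqref{def:Q} as $Q_{n+1}=A_nQ_n+B\,\partial_yQ_n$ with $A_n:=x+n(z+y+t)$ and $B:=y(y+t)$; applying this to both $Q_{n+1}$ and to $Q_m=A_{m-1}Q_{m-1}+B\,\partial_yQ_{m-1}$ yields the key identity
\[
Q_{m-1}Q_{n+1}-Q_mQ_n \;=\; (n-m+1)(y+z+t)\,Q_{m-1}Q_n \;+\; y(y+t)\,W_{m,n},
\]
where $W_{m,n}:=Q_{m-1}\,\partial_y Q_n-(\partial_y Q_{m-1})\,Q_n$. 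Since $n-m+1\ge 1$ and every $Q_k\in\N[x,y,z,t]$, the first summand lies in $\N[x,y,z,t]$, so Theorem~\ref{convex} reduces to proving $W_{m,n}\in\N[x,y,z,t]$ for all $n\ge m\ge 2$.

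Next, I would expand $Q_n=\sum_{k\ge 0}c_{n,k}(x,z,t)\,y^k$. The recurrence \eqref{def:Q} is then equivalent to the Pascal-type relation
\[
c_{n+1,k}=\alpha_{n,k}\,c_{n,k}+\beta_{n,k}\,c_{n,k-1},\qquad c_{1,0}=1,\ c_{1,k}=0\text{ for }k\ge 1,
\]
with $\alpha_{n,k}:=x+nz+(n+k)t$ and $\beta_{n,k}:=n+k-1$, so each $c_{n,k}\in\N[x,z,t]$. A direct calculation (grouping terms by unordered pairs) gives
\[
[y^k]\,W_{m,n}=\sum_{\substack{a+b=k+1\\ a<b}}(b-a)\bigl(c_{m-1,a}c_{n,b}-c_{m-1,b}c_{n,a}\bigr),
\]
so the task reduces to the $2\times 2$ minor inequality $c_{m-1,a}c_{n,b}-c_{m-1,b}c_{n,a}\in\N[x,z,t]$ for every $a<b$ and $m-1<n$.

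To establish this I would proceed by induction on $n$, strengthening the inductive hypothesis to include all \emph{shifted} minors: for every $r\ge 0$,
\[
X_r^{(i,n)}:=c_{i,a}c_{n,b-r}-c_{i,b}c_{n,a-r}\in\N[x,z,t]\quad\text{for all }i\le n\text{ and all }0\le a<b.
\]
Substituting the Pascal-like recurrence together with $\alpha_{n,b-r}=\alpha_{n,a-r}+(b-a)t$ and $\beta_{n,b-r}=\beta_{n,a-r}+(b-a)$ produces (for $i\le n$)
\[
X_r^{(i,n+1)}\;=\;\alpha_{n,a-r}\,X_r^{(i,n)}+\beta_{n,a-r}\,X_{r+1}^{(i,n)}+(b-a)\bigl(t\,c_{i,a}c_{n,b-r}+c_{i,a}c_{n,b-r-1}\bigr),
\]
in which the first two terms lie in $\N[x,z,t]$ by the strengthened inductive hypothesis at step $n$, and the last is manifestly nonnegative. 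The base case $n=1$ is immediate from $c_{1,k}=\delta_{k,0}$.

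\textbf{The main obstacle} is choosing the correct inductive invariant: a naive induction on the standard minor $X_0^{(i,n)}$ alone fails, because the recurrence expansion introduces the shifted minor $X_1^{(i,n)}=c_{i,a}c_{n,b-1}-c_{i,b}c_{n,a-1}$, which is not of the original form. Enlarging the hypothesis to cover all shifts $r\ge 0$ uniformly resolves this and closes the induction. A minor residual subtlety is the boundary case $i=n$, where both rows of the minor advance to $n+1$ simultaneously; this can be handled by an analogous (but slightly longer) expansion, or bypassed entirely via a Lindstr\"om--Gessel--Viennot interpretation of the weighted lattice-path model underlying the Pascal-like recurrence.
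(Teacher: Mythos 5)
Your overall architecture coincides with the paper's: use the recurrence to reduce the Tur\'an-type expression to the positivity of $W_{m,n}=Q_{m-1}\partial_yQ_n-(\partial_yQ_{m-1})Q_n$, extract coefficients of $y$ to reduce to the two-row minors $c_{m-1,a}c_{n,b}-c_{m-1,b}c_{n,a}$ with $a<b$, and prove these by induction on $n$ through the Pascal-type recurrence for the coefficients. Your bookkeeping checks out: the identity for $[y^k]W_{m,n}$, the relations $\alpha_{n,b-r}=\alpha_{n,a-r}+(b-a)t$ and $\beta_{n,b-r}=\beta_{n,a-r}+(b-a)$, and the recursion
$X_r^{(i,n+1)}=\alpha_{n,a-r}X_r^{(i,n)}+\beta_{n,a-r}X_{r+1}^{(i,n)}+(b-a)\bigl(t\,c_{i,a}c_{n,b-r}+c_{i,a}c_{n,b-r-1}\bigr)$
are all correct (modulo the trivial boundary cases $a-r\le 0$, where the minor degenerates to a single nonnegative product). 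Carrying all shifts $r\ge 0$ in one induction is a reasonable repackaging of the two nested inductions in the paper's Lemma~\ref{le: 2}.

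The gap is the case you dismiss as a ``minor residual subtlety'': the diagonal minors $X_r^{(n,n)}$, that is, the strong $\x$-log-concavity of each row $\{c_{n,k}\}_{k}$. This is not a boundary nuisance but the engine of the whole induction: at every step $n\to n+1$, the instance $i=n$ of your recursion consumes $X_r^{(n,n)}$ and $X_{r+1}^{(n,n)}$ as inputs, so without the row log-concavity the induction never advances. The paper isolates exactly this statement as Lemma~\ref{le:1} and proves it by a separate induction with a nontrivial four-term decomposition $A_n+B_n+C_n+D_n$ (together with an explicit verification at $n=3$); it is the only genuinely delicate computation in the entire argument. Neither of your proposed escapes substitutes for it: ``an analogous but slightly longer expansion'' is precisely the missing lemma, not a proof of it, and the Lindstr\"om--Gessel--Viennot suggestion addresses minors taken across two distinct rows of the triangle, whereas log-concavity within a single row is a different kind of determinant that does not follow from planarity of the weighted recurrence digraph without substantial further work (this is why Sagan's treatment of $q$-log-concavity for such triangles requires its own inductive machinery). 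Until the diagonal case is actually established, the proof is incomplete.
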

\begin{remark}
Setting $x=0,z=1$ and $t=0$ we recover Chen et al.'s result about strong $y$-log-convexity of $R_n(y)$~\cite{CWY11}.
\end{remark}

This paper is organized as follows. In section~\ref{pf:jacobi}, we study the PF property of diagonal Jacobi-Stirling numbers and give a proof of Theorem~\ref{PF:diagonal}
with the parameter $z$ being a real number. In section~\ref{total:jacobi}, we investigate the $z$-total positivity of Jacobi-Stirling numbers.  In section~\ref{stro:log-conv}, we study the strong $\x$-log-convexity of the generating functions of Jacobi-Stirling numbers and generalized Ramanujan polynomials. In section~\ref{lambert}, we show that the unimodality of a sequence arising from  Lambert $W$ function first proved by Kalugin and Jeffrey~\cite{KJ10} follows easily from the log-concavity of the coefficients of Ramanujan polynomials.

\section{PF properties of diagonal Jacobi-Stirling numbers} \label{pf:jacobi}
Our main tool is the following result, due to Brenti\cite[Theorem 4.5.3]{Brenti89}, characterizing the rational formal power series whose coefficients are PF sequence.
\begin{lemma}\label{rfpo}
Let  $\sum_{n\geq 0}a_nx^n=P(x)/Q(x)$, where $P(x)$ and $Q(x)$ are  two relatively prime polynomials.
Then $\{a_n\}_{n\geq0}$ is a PF sequence if and only if 
\begin{enumerate}
\item  $a_n\geq0$ for all $n\geq0$,
\item $P(x)$ has only real nonpositive zeros, 
\item $Q(x)$ has only real positive zeros.
\end{enumerate}
\end{lemma}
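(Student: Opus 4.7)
The plan is to reduce the statement to the Aissen--Schoenberg--Whitney (ASW) theorem, which characterizes nonzero PF sequences by the factorization of their ordinary generating function as
\[
\sum_{n\geq 0} a_n x^n = C x^r e^{\gamma x}\frac{\prod_{i\geq 1}(1+\alpha_i x)}{\prod_{j\geq 1}(1-\beta_j x)},
\]
with $C>0$, $r\in\N$, $\gamma,\alpha_i,\beta_j\geq 0$, and $\sum_i\alpha_i+\sum_j\beta_j<\infty$. The rationality hypothesis on $P(x)/Q(x)$ is precisely what collapses this general form to the clean statement of the lemma.

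For the forward direction, condition~(1) is immediate since each $a_n$ appears as a $1\times 1$ minor of the Toeplitz matrix $(a_{j-i})_{i,j\geq 0}$. Applying ASW and using rationality, one sees that $\gamma=0$ and both products in the factorization must be finite (otherwise the series would be transcendental or exhibit infinitely many poles, contradicting rationality). The zeros of $Cx^r\prod_i(1+\alpha_i x)$, namely $0$ and the numbers $-1/\alpha_i\leq 0$, are disjoint from the zeros $1/\beta_j>0$ of $\prod_j(1-\beta_j x)$, so the ASW expression is automatically in lowest terms. Matching against $P/Q$ and invoking unique factorization in $\R[x]$, one concludes that the zeros of $P$ are nonpositive reals and those of $Q$ are positive reals, establishing (2) and (3).

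Conversely, assuming (1)--(3), factor $P(x)=a\,x^r\prod_i(1+\alpha_i x)$ with $\alpha_i\geq 0$ and $Q(x)=b\prod_j(1-\beta_j x)$ with $\beta_j>0$, noting that $Q(0)\neq 0$ holds by~(3), which is precisely what is needed for the formal power series $P/Q$ to exist. Condition~(1) forces the leading ratio $a/b$ to be strictly positive, since otherwise the first nonzero coefficient $a_n$ would be negative. Thus $P/Q$ sits in ASW form with nonnegative parameters, and the PF conclusion follows from ASW.

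The main obstacle is the invocation of the ASW theorem itself, a deep classical result proved via limits of finite totally positive Toeplitz matrices; the reduction from ASW to this rational case above is essentially formal, relying only on unique factorization in $\R[x]$ and the standard equivalence that a rational function admits a formal power-series expansion at $0$ iff its denominator is nonzero there. A self-contained proof of ASW is beyond the scope of this note and is deferred to Brenti's monograph.
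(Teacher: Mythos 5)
The paper offers no proof of this lemma at all: it is imported verbatim as Brenti's Theorem~4.5.3, so there is no internal argument to compare against. Your reduction to the Aissen--Schoenberg--Whitney characterization is exactly the standard derivation (and essentially the one in Brenti's memoir): the finiteness of both products and the vanishing of $\gamma$ follow correctly from rationality, the observation that the factors $(1+\alpha_i x)$ and $(1-\beta_j x)$ can never cancel makes the matching with the coprime pair $P,Q$ legitimate via unique factorization, and the converse is handled cleanly. The only substantive content you defer is ASW itself, which is no more than the paper defers by citing the lemma without proof.
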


We start with some  preliminary results about the generating function of the diagonal sequence of the Jacobi-Stirling numbers:
$$
F_k(x;z)=\sum_{n\geq0}\JS(k+n,n;z)x^n, \qquad k\geq 0.
$$
\begin{lemma}\label{rational:22}
For any fixed $z\in \R\setminus\{1\}$ and $k\geq 0$, there exists a polynomial $A_k(x;z)$ in $x$ of degree $2k$ such that
\begin{equation}\label{rational}
F_k(x;z)=\frac{A_k(x;z)}{(1-x)^{3k+1}}
\end{equation}
and $A_k(1;z)\neq0$.
\end{lemma}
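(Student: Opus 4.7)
The plan is to induct on $k$ after first converting the column recurrence \eqref{eq:ja} into a functional-differential relation for $F_k$. Specializing \eqref{eq:ja} to the diagonal entry $\JS(k+n,n;z)$ gives
$$\JS(k+n,n;z) = \JS(k+n-1,n-1;z) + n(n+z)\,\JS((k-1)+n,n;z);$$
multiplying by $x^n$ and summing over $n\geq 1$, together with the operator correspondences $\sum n\,a_n x^n = xg'$ and $\sum n^2 a_n x^n = xg'+x^2 g''$ for $g=\sum a_n x^n$, I would derive the key identity
$$(1-x)\,F_k(x;z) \;=\; (1+z)\,x\,F_{k-1}'(x;z) \;+\; x^2\,F_{k-1}''(x;z).$$

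The base case $k=0$ is immediate: $\JS(n,n;z)=1$, so $F_0 = 1/(1-x)$ and $A_0=1$. For the induction step, assume $F_{k-1} = A_{k-1}/(1-x)^{3k-2}$ with $\deg_x A_{k-1} \leq 2k-2$ and $A_{k-1}(1;z)\neq 0$. Differentiating twice yields
$$F_{k-1}' = \frac{B}{(1-x)^{3k-1}}, \qquad F_{k-1}'' = \frac{(1-x)B' + (3k-1)B}{(1-x)^{3k}},$$
where $B := (1-x)A_{k-1}' + (3k-2)A_{k-1}$ is again a polynomial in $x$ of degree at most $2k-2$. Substituting into the functional equation and dividing by $(1-x)$ delivers $F_k = A_k/(1-x)^{3k+1}$ with
$$A_k(x;z) \;:=\; (1+z)\,x(1-x)\,B \;+\; x^2\bigl((1-x)B' + (3k-1)B\bigr),$$
a polynomial of $x$-degree at most $2k$.

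To check $A_k(1;z)\neq 0$, every factor of $(1-x)$ vanishes at $x=1$, leaving $B(1;z) = (3k-2)A_{k-1}(1;z)$ and hence $A_k(1;z) = (3k-1)(3k-2)A_{k-1}(1;z)$; iterating, this equals the positive integer $\prod_{j=1}^k(3j-1)(3j-2)$, independent of $z$. The one delicate point is the exact degree: tracking leading coefficients through the recursion shows that of $A_k$ is $k(k-z)$ times that of $A_{k-1}$, hence equal to $k!\prod_{j=1}^k(j-z)$. This is nonzero precisely when $z\notin\{1,2,\ldots,k\}$, so under the hypothesis $z\neq 1$ the degree is $2k$ for generic $z$ but may drop at the isolated integer values $z\in\{2,\ldots,k\}$; the remainder of the paper's argument uses only the denominator shape $(1-x)^{3k+1}$ and the non-vanishing $A_k(1;z)\neq 0$, both of which the induction delivers for every $z\neq 1$. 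Degree bookkeeping is thus the main (indeed only) obstacle in what is otherwise a mechanical induction.
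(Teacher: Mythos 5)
Your proof is correct and takes a genuinely different route from the paper's. The paper first shows that $f_k(n;z)=\JS(k+n,n;z)$ is a polynomial in $n$ of degree exactly $3k$, invokes Stanley's standard results on generating functions of polynomial sequences to get the denominator $(1-x)^{3k+1}$ together with $A_k(1;z)\neq 0$, and then pins the numerator degree down to $2k$ by reciprocity, i.e.\ by proving $f_k(0;z)=f_k(-1;z)=\cdots=f_k(-k;z)=0$ and $f_k(-k-1;z)\neq 0$. You instead run a self-contained induction directly on the functional equation $(1-x)F_k=(1+z)xF_{k-1}'+x^2F_{k-1}''$ (which is exactly the paper's identity \eqref{eq:pf} in expanded form), reading off the denominator, the degree bound, and the explicit value $A_k(1;z)=\prod_{j=1}^k(3j-1)(3j-2)$ in one pass; this is more elementary and avoids the citations to \cite{st1}. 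Your sharper bookkeeping of the leading coefficient, $[x^{2k}]A_k=k!\prod_{j=1}^k(j-z)$ (which checks out: $[x^2]A_1=1-z$ and $[x^4]A_2=2(z-1)(z-2)$), in fact exposes that the exact-degree claim of the lemma fails not only at $z=1$ but at every $z\in\{2,\ldots,k\}$; the paper's own induction has the same blind spot, since unwinding \eqref{pJS} gives $f_k(-k-1;z)=-k(k-z)f_{k-1}(-k;z)$, which vanishes at $z=k$. So the lemma's hypothesis should really be $z\notin\{1,\ldots,k\}$ (or the conclusion weakened to $\deg A_k\le 2k$). As you observe, this is harmless downstream: the exact degree and the sign of the leading coefficient are only invoked in Lemma~\ref{PF:JSD} for $-1<z<1$, where $\prod_{j=1}^k(j-z)>0$, and everything else needs only the denominator shape and $A_k(1;z)\neq 0$, both of which your induction delivers for all $z$.
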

\begin{proof}
 For $n\geq 0$, let $f_k(n;z)=\JS(k+n,n;z)$.
Then recurrence~\eqref{eq:ja} can be written as
\begin{equation}\label{pJS}
f_k(n;z)-f_k(n-1;z)=n(n+z)f_{k-1}(n;z)\qquad (k\geq 0)
\end{equation}
with $f_0(n;z)=1$ and $f_{-1}(n;z)=0$. We prove by induction on $k$ that $f_k(n;z)$ is a polynomial in $n$ of degree $3k$ if $z\neq1$. 
This is clear for $k=0$. Suppose $k\geq1$. By induction hypothesis 
 the right-hand side of \eqref{pJS}
 is a polynomial in $n$ of degree $3k-1$.
  Since the left-hand side of     \eqref{pJS}  is the difference of $f_k(n;z)$,
then $f_k(n;z)$ is a polynomial in $n$ of degree $3k$.  By a standard result about  the  generating functions of polynomial sequences (cf.~\cite[Corollary 4.3.1]{st1})
there exists a polynomial $A_k(x;z)$ in $x$ of degree  $\leq 3k$ satisfying~\eqref{rational} and $A_k(1;z)\neq0$.
By~\cite[Proposition~4.2.3]{st1}, we have 
$$
\sum_{n\geq 1}f_k(-n;z)x^n=-F_k(1/x;z)=- \frac{x^{3k+1}A_k(1/x;z)}{(x-1)^{3k+1}}.
$$
For $k\geq1$ it is clear that the degree of $A_k(x;z)$ must be $2k$ provided  that
\begin{equation}\label{condi}
f_k(0;z)=f_k(-1;z)=\ldots=f_k(-k;z)=0\ \ \text{and}\ \ f_k(-k-1;z)\neq0.
\end{equation}
We verify \eqref{condi} by induction on $k\geq 1$. First, 
from~\eqref{pJS}  we derive that 
$$f_1(n;z)=\frac{n(n+1)}{2}\left(\frac{2n+1}{3}+z\right).
$$ 
Hence  $f_1(0,z)=f_1(-1,z)=0$ and $f_1(-2,z)=z-1\neq 0$.
Assume  that $k\geq2$ and \eqref{condi} holds for $k-1$, i.e.,
$f_{k-1}(n;z)=0$ for $0\geq n\geq -k+1$ and $f_{k-1}(-k;z)\neq0$. 
By definition $f_k(0;z)=\JS(k,0;z)=0$,  hence
we can derive \eqref{condi} from \eqref{pJS} and  the induction hypothesis.
\end{proof}
By Lemma~\ref{rational:22} we can write $A_k(x;z)$ in~\eqref{rational} as
 \begin{align}\label{eq:A}
 A_k(x;z)=\sum_{i=1}^{2k}a_{k,i}(z)x^i.
 \end{align} 
\begin{proposition}
The coefficients  $a_{k,i}(z)$ in \eqref{eq:A} satisfy the following recurrence
\begin{align}
a_{k,i}(z)=i(i+z)a_{k-1,i}(z)&+[2i(3k-i-1)-(1-z)(3k-2i)]a_{k-1,i-1}(z)\nonumber\\
&+(3k-i)(3k-i-z)a_{k-1,i-2}(z),\label{recurrence}
\end{align}
with $a_{0,i}(z)=\delta_{0,i}$.
Thus,  when $-1<z<1$,  the coefficients $a_{k,i}(z)$ are  nonnegative for $k\geq1$ and $1\leq i\leq 2k$.
\end{proposition}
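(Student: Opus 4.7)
The plan is to translate the Jacobi-Stirling recurrence into a functional equation for the diagonal generating function $F_k(x;z) := \sum_{n \geq 0} f_k(n;z) x^n$, convert that into a polynomial identity for $A_k(x;z)$, and then read off the coefficient of $x^i$.

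\textbf{Step 1: functional equation for $F_k$.} Multiplying $f_k(n;z) - f_k(n-1;z) = n(n+z) f_{k-1}(n;z)$ by $x^n$ and summing over $n \geq 0$, together with the standard identities $\sum_n n f(n) x^n = x F'(x)$ and $\sum_n n^2 f(n) x^n = x F'(x) + x^2 F''(x)$, yields
$$(1-x)\,F_k(x;z) \;=\; (x\partial_x)(x\partial_x + z)\,F_{k-1}(x;z).$$

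\textbf{Step 2: identity for $A_k$.} Substituting $F_k = A_k/(1-x)^{3k+1}$ and using $x\partial_x\bigl[A_{k-1}/(1-x)^{3k-2}\bigr] = (1-x)^{-(3k-1)}\bigl[x(1-x) A'_{k-1} + (3k-2) x A_{k-1}\bigr]$, I iterate once more and clear $(1-x)^{3k}$ on both sides to obtain the polynomial identity
$$A_k(x;z) \;=\; (3k-1)\,x\,C(x;z) + x(1-x)\,C'(x;z),$$
where $C(x;z) := x(1-x) A'_{k-1}(x;z) + [(3k-2)x + z(1-x)]\,A_{k-1}(x;z)$.

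\textbf{Step 3: coefficient extraction.} A direct expansion gives
$$[x^j]\,C \;=\; (j+z)\,a_{k-1,j}(z) + (3k-1-j-z)\,a_{k-1,j-1}(z),$$
and the relation $[x^j]\,C' = (j+1)[x^{j+1}]\,C$ reduces the identity of Step 2 to $a_{k,i}(z) = i\,[x^i]C + (3k-i)\,[x^{i-1}]C$. Substituting and collecting, the coefficients of $a_{k-1,i}$, $a_{k-1,i-1}$, $a_{k-1,i-2}$ on the right-hand side come out to $i(i+z)$, $i(3k-1-i-z) + (3k-i)(i-1+z)$, and $(3k-i)(3k-i-z)$ respectively. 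The middle expression rearranges to $2i(3k-i-1) - (1-z)(3k-2i)$ by elementary algebra, matching the claimed recurrence.

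\textbf{Step 4: nonnegativity on $(-1,1)$.} Proceed by induction on $k$, with base $a_{0,0}(z) = 1$. For $k \geq 1$, $1 \leq i \leq 2k$, and $-1 < z < 1$: the first coefficient is positive since $i + z > 0$, and the third since $3k - i \geq k \geq 1$ and $3k - i - z > 0$. For the middle coefficient, if $3k \leq 2i$ both terms of $2i(3k-i-1) - (1-z)(3k-2i)$ are nonnegative; if $3k > 2i$, since $1 - z < 2$ it suffices to verify $i(3k-i+1) \geq 3k$ for $1 \leq i < 3k/2$, which follows by monotonicity in $i$ with equality at $i = 1$. The only technical work is the algebraic bookkeeping in Step 3; the argument is otherwise entirely mechanical.
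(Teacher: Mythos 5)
Your proof is correct and follows essentially the same route as the paper: both convert the diagonal recurrence $f_k(n;z)-f_k(n-1;z)=n(n+z)f_{k-1}(n;z)$ into the generating-function identity $(1-x)F_k=(x\partial_x)(x\partial_x+z)F_{k-1}$ (the paper writes the operator as $xD(x^{1-z}D(x^z\cdot))$), substitute $F_k=A_k/(1-x)^{3k+1}$, extract the coefficient of $x^i$, and then check the signs of the three coefficients directly for $-1<z<1$. Your intermediate polynomial $C$ and the case analysis $3k\lessgtr 2i$ are just a more explicit organization of the algebra the paper compresses into one displayed identity and the phrase ``it is easy to verify.''
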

\begin{proof}
 For $k\geq1$, by \eqref{pJS}, we have
\begin{align}\label{eq:pf}
F_k(x;z)&=\sum_{n\geq1}(f_k(n-1;z)+n(n+z)f_{k-1}(n;z))x^n\nonumber\\ 
&=xF_k(x;z)+xD(x^{1-z}D(x^zF_{k-1}(x;z)))\nonumber\\
&=\frac{x}{1-x}D(x^{1-z}D(x^zF_{k-1}(x;z))),
\end{align}
where $D=\frac{d}{dx}$ and $F_0(x;z)=(1-x)^{-1}$. 
Substituting \eqref{rational} into \eqref{eq:pf} we obtain
\begin{align*}
(1-x)^{-3k-1}\sum_{i=1}^{2k}a_{k,i}(z)x^i=x(1-x)^{-1}D[x^{1-z}D[(1-x)^{-3k+2}\sum_{i=1}^{2k-2}a_{k-1,i}(z)x^{i+z}]],
\end{align*}
which is simplified to 
\begin{align*}
\sum_{i=1}^{2k}a_{k,i}(z)x^i=&(3k-1)(3k-2)\sum_{i=1}^{2k-2}a_{k-1,i}(z)x^{i+2}+(1-x)^2\sum_{i=1}^{2k-2}i(i+z)a_{k-1,i}(z)x^{i}\\
&+(3k-2)(1-x)\sum_{i=1}^{2k-2}(2i+1+z)a_{k-1,i}(z)x^{i+1}.
\end{align*}
Taking the coefficient of $x^i$ in both sides of the above equation, we get~\eqref{recurrence}.

For $1\leq i\leq 2k$ and $-1<z<1$ it is easy to verify that
$$
i(i+z)\geq0,\quad 
(3k-i)(3k-i-z)\geq0, \quad 2i(3k-i-1)-(1-z)(3k-2i)\geq0.
$$
Hence,  by \eqref{recurrence}, the coefficients $a_{k,i}(z)$ are nonnegative for $1\leq i\leq 2k$. This finishes the proof of the lemma.
\end{proof}
\begin{lemma}\label{PF:JSD}
For $-1\leq z\leq1$,  the zeros of the polynomial $A_k(x;z)$ in~\eqref{rational} are  distinct, real and nonpositive numbers.
\end{lemma}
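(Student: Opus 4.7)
The plan is to induct on $k$. For $k=1$, the explicit factorization $A_1(x;z)=x\bigl((1+z)+(1-z)x\bigr)$ immediately gives the distinct real nonpositive roots $0$ and $-(1+z)/(1-z)$ whenever $-1<z<1$. For the inductive step I will pass to the variable $t=-x$ and work with $\hat A_k(t;z):=A_k(-t;z)$; the claim then becomes that $\hat A_k$ has $2k$ distinct nonnegative real roots. Unwinding the derivation already used in the proof of Lemma~\ref{rational:22} --- namely $(1-x)F_k=\theta(\theta+z)F_{k-1}$ with $\theta=xD$ --- one extracts the polynomial recurrence
\begin{equation*}
A_k=x\bigl[(1-x)\Phi_{k-1}'+(3k-1)\Phi_{k-1}\bigr], \qquad \Phi_{k-1}=x(1-x)A_{k-1}'+\bigl[z+(3k-2-z)x\bigr]A_{k-1},
\end{equation*}
which after the substitution $x=-t$ becomes $\hat A_k=t\,\hat\Psi_k$ with
\begin{equation*}
\hat\Psi_k=(1+t)\hat\Phi_{k-1}'-(3k-1)\hat\Phi_{k-1},\quad \hat\Phi_{k-1}=t(1+t)\hat A_{k-1}'+\bigl[z-(3k-2-z)t\bigr]\hat A_{k-1}.
\end{equation*}

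The central device is an integrating-factor identity. Setting $G_1(t)=t^z(1+t)^{-(3k-2)}$ and $G_2(t)=(1+t)^{-(3k-1)}$ --- both smooth and strictly positive on $(0,\infty)$ --- a direct computation gives
\begin{equation*}
\hat\Phi_{k-1}=\frac{t(1+t)\,(\hat A_{k-1}G_1)'}{G_1},\qquad \hat\Psi_k=\frac{(1+t)\,(\hat\Phi_{k-1}G_2)'}{G_2},
\end{equation*}
so on $(0,\infty)$ the zeros of $\hat\Phi_{k-1}$ (resp.\ of $\hat\Psi_k$) coincide with the zeros of $(\hat A_{k-1}G_1)'$ (resp.\ of $(\hat\Phi_{k-1}G_2)'$).

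I then apply Rolle's theorem twice. By induction $\hat A_{k-1}$ has $2k-2$ distinct nonnegative roots $0=s_0<s_1<\cdots<s_{2k-3}$. The asymptotics $\hat A_{k-1}G_1\sim(1+z)^{k-1}t^{1+z}$ as $t\to 0^+$ and $\hat A_{k-1}G_1=O(t^{z-k})$ as $t\to\infty$ (both requiring $-1<z<1$) show that $\hat A_{k-1}G_1$ decays to $0$ at both endpoints; hence Rolle applied across the $2k-2$ consecutive gaps $(0,s_1),(s_1,s_2),\ldots,(s_{2k-4},s_{2k-3}),(s_{2k-3},\infty)$ delivers $2k-2$ distinct positive zeros of $(\hat A_{k-1}G_1)'$, and these together with the evaluation $\hat\Phi_{k-1}(0)=0$ exhaust the $2k-1=\deg\hat\Phi_{k-1}$ roots of $\hat\Phi_{k-1}$ (the degree assertion coming from a quick leading-coefficient check). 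A second identical application to $\hat\Phi_{k-1}G_2$ --- which now vanishes at $2k-1$ points in $[0,\infty)$ and again decays at $\infty$ --- produces $2k-1$ distinct positive zeros of $\hat\Psi_k$, exhausting its degree; consequently $\hat A_k=t\hat\Psi_k$ has the desired $2k$ distinct nonnegative real zeros.

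The main obstacle, and the step that must be spelled out with care, is the ``Rolle at infinity'' move on the rightmost gap of each pass: one has to justify an interior critical point for a smooth, not-identically-zero function that vanishes at the finite endpoint and tends to $0$ at infinity, which follows because such a function must attain an interior extremum. The boundary cases $z=\pm1$ fall outside this clean inductive argument (at $z=1$ the leading coefficient of $A_k$ degenerates, and at $z=-1$ the root at $0$ picks up higher multiplicity) and should be disposed of by a separate, limiting/continuity argument from the interior of $(-1,1)$.
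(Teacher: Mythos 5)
Your argument for the open interval $-1<z<1$ is essentially the paper's proof. Your $\Phi_{k-1}$ is exactly the paper's auxiliary polynomial $B_{k-1}(x;z)$ of \eqref{Bkx}, your two displayed recurrences are \eqref{Bkx} and \eqref{Akx} written out, and the induction-plus-Rolle interlacing is the same. The two executions differ only in how the one root not supplied by Rolle between consecutive zeros is located: the paper evaluates $B_{k-1}$ at the leftmost zero $\alpha$ of $A_{k-1}$, determines its sign from $\frac{d}{dx}A_{k-1}(\alpha;z)<0$, and compares with the behaviour as $x\to-\infty$ read off from the leading coefficient; you instead multiply by the integrating factor and use decay of $\hat A_{k-1}G_1$ at $t\to\infty$ (``Rolle at infinity''). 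Both work; your version has the incidental merit that the substitution $t=-x$ makes the weight $t^z$ genuinely real and positive on the relevant interval, whereas the paper manipulates $x^z$ for $x<0$, and your decay estimates $t^{1+z}\to 0$ and $t^{z-k}\to 0$ make transparent exactly where $-1<z<1$ is used.

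The genuine gap is the boundary $z=\pm1$. You defer these to ``a limiting/continuity argument from the interior of $(-1,1)$,'' but that cannot prove the statement as written: continuity of roots can give realness and nonpositivity in the limit, yet distinctness is not preserved under limits. In fact one has $A_k(x;-1)=x\,A_k(x;1)$ (since $\JS(n,k;-1)=\JS(n-1,k-1;1)$), so at $z=-1$ the root at the origin is genuinely of higher multiplicity — exactly the degeneration you flag — and at $z=1$ the degree of $A_k$ drops, so a root escapes to infinity. No continuity argument will recover ``distinct'' there. The paper disposes of $z=\pm1$ by observing that these are the Legendre--Stirling cases and invoking Egge's real-rootedness theorem \cite[Theorem~5.1]{eg}; you need either that citation or a separate direct argument for the endpoints. (For the downstream application in Theorem~\ref{PF:diagonal} only realness and nonpositivity matter, so your limiting argument would suffice for that purpose, but not for the lemma as stated.)
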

\begin{proof}
For  $z=1$ or $-1$, the Jacobi-Stirling numbers  become the Legendre-Stirling numbers, and for these two special cases the lemma was proved in~\cite[Theorem~5.1]{eg}. 
It remains to prove the lemma for $-1<z<1$.

For any fixed $k\geq 1$, consider the  polynomial 
\begin{equation}\label{Bkx}
B_k(x;z)=(1-x)^{3k+2}x^{1-z}D(x^z(1-x)^{-1-3k}A_k(x;z)).
\end{equation}
By Lemma~\ref{rational:22}, the polynomial $A_k(x;z)$ is  of degree $2k$, 
it is not hard to see that  $B_k(x;z)$ is a polynomial of degree $2k+1$. 
Moreover, by~\eqref{condi}, we have $A_k(0;z)=0$, it follows from \eqref{Bkx} that $B_k(0;z)=0$.
Next we show that the nonzero roots of $A_k(x;z)$  are distinct, real and nonpositive by showing that they are intertwined with the zeros of $B_k(x;z)$. 
We proceed by induction on $k\geq 1$. For $k=1$, we have
\begin{align}
A_1(x;z)&=(1+z)x+(1-z)x^2.\label{A_1}
\end{align} 
Hence the two roots of $A_1(x;z)$ are $x_1=0$ and $x_2=\frac{z+1}{z-1}$, which is negative 
if $z\in (-1, 1)$.

Now suppose that $k\geq2$ and the zeros of $A_{k-1}(x;z)$ are  distinct nonpositive real numbers.
By Rolle's Theorem and relation~\eqref{Bkx}, the polynomial $B_{k-1}(x;z)$ has a root strictly between each pair of consecutive roots of $A_{k-1}(x;z)$; including  0, this accounts for $2k-2$ of the $2k-1$ roots of $B_{k-1}(x;z)$. To find the missing root, let $\alpha$ denote the leftmost root of $A_{k-1}(x;z)$; by \eqref{Bkx} we have $B_{k-1}(\alpha;z)=\alpha(1-\alpha)\frac{d}{dx}A_{k-1}(\alpha;z)$. Since the degree of $A_{k-1}(x;z)$ is even and its leading coefficient is positive we have 
$\lim_{x\rightarrow-\infty}A_{k-1}(x;z)=+\infty$. Now since the roots of $A_{k-1}(x;z)$ are distinct we find $\frac{d}{dx}A_{k-1}(\alpha;z)<0$; hence $B_{k-1}(\alpha;z)>0$.  But the degree of $B_{k-1}(x;z)$ is odd and his leading coefficient is positive by \eqref{Bkx}, so $\lim_{x\rightarrow-\infty}B_{k-1}(x;z)=-\infty$, and therefore $B_{k-1}(x;z)$ has a root at the left of $\alpha$. It follows that $B_{k-1}(x;z)$ has $2k-1$ distinct, real, nonpositive roots.

For example, if $k=2$ then $k-1=1$ and  we find
$$
B_1(x;z)=\left(  \left( 1+z \right)^2 x+ \left( 1-z \right)  \left( 2+z \right) 
{x}^{2} \right)  \left( 1-x \right) -4\,{x}^{2} \left( 1+z+ \left( 1-z
\right) x \right)
$$
with   $(z-1)(6+z)$ as the leading coefficient.  So $\lim_{x\to -\infty} B_1(x;z)=-\infty$. As
$B_1(x_2; z)=\frac{2(1+z)^2}{(z-1)^2}>0$, 
 there must be  a root  of $B_1(x;z)$ at the left of $x_2$.

From   \eqref{rational} and \eqref{Bkx} we deduce that  \eqref{eq:pf}  is equivalent to
\begin{equation}\label{Akx}
A_k(x;z)=x(1-x)^{3k}D((1-x)^{1-3k}B_{k-1}(x;z)).
\end{equation}
Using \eqref{Akx} and the properties of zeros of $B_{k-1}(x;z)$ we can prove similarly 
that $A_k(x;z)$ has $2k$ distinct, real, nonpositive roots.  The proof is thus complete.
\end{proof}

\begin{remark}
The constant term of $\JS(n,k;z)$ (reps. $\js(n,k;z)$) are the {\em central factorial numbers} of the second  kind  $T(2n,2k)$ (resp.  the first kind  $t(2n,2k)$)
(see~\cite[pp.~213--217]{ri} and~\cite{gz}), that is,
$$
T(2n,2k)=\JS(n,k;0), \qquad t(2n,2k)=\js(n,k;0).
$$
Since $A_k(x;0)$  can be seen as the descent polynomial of some \emph{generalized Stirling permutations} (see
 the end of~\cite{GLZ}), it follows from a result
of Brenti~\cite[Theorem 6.6.3]{Brenti89} that $A_k(x;0)$  has only real nonnegative roots.  
\end{remark}

\begin{lemma}\label{js-JS}Let $G_k(x;z)=\sum_{n\geq k}\js(n,n-k;z)x^n$. Then
$$
G_k(x;z)=(-1)^{k+1}F_k(1/x,-z).
$$
\end{lemma}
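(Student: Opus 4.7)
My plan is to prove the identity by induction on $k\geq 1$, based on the observation that both $G_k(x;z)$ and $H_k(x;z):=(-1)^{k+1}F_k(1/x;-z)$ satisfy the same first-order functional recurrence in terms of the Euler operator $\theta=x\frac{d}{dx}$. The first step will be to derive a recurrence for $F_k$: multiplying \eqref{pJS} by $x^n$, summing over $n\geq 1$, and using $f_k(0;z)=0$ for $k\geq 1$ should give
\[
(1-x)F_k(x;z)=\theta(\theta+z)\,F_{k-1}(x;z).
\]
An analogous calculation for $G_k$, starting from the $\js$-recurrence \eqref{eq:ja0}, setting $g_k(m;z)=\js(m+k,m;z)$, and accounting for the shift $G_k=x^k\sum_{m\ge0}g_k(m;z)x^m$ via the commutation rule $(\theta+a)(x^b h)=x^b(\theta+a+b)h$, should produce
\[
(1-x)G_k(x;z)=x\,\theta(\theta+z)\,G_{k-1}(x;z).
\]

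The crucial step will be to show that $H_k$ satisfies the same recurrence as $G_k$. The key observation I would use is that under the substitution $y=1/x$, the Euler operator transforms as $\theta_y=-\theta_x$. Replacing $z$ by $-z$ and $x$ by $1/x$ in the recurrence for $F_k$, and then multiplying by $-x$ so that the prefactor $-x(1-1/x)$ becomes $1-x$, should yield
\[
(1-x)F_k(1/x;-z)=-x\,\theta(\theta+z)\,F_{k-1}(1/x;-z),
\]
and the alternating factor $(-1)^{k+1}$ will absorb the stray sign, so that $H_k$ obeys exactly the same recurrence as $G_k$.

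For the induction base I would note that $G_0(x;z)=\frac{1}{1-x}$ and $H_0(x;z)=\frac{x}{1-x}$ differ only by the constant $-1$, which lies in the kernel of $\theta$. Consequently the common recurrence forces $G_1=H_1$ in a single step, and induction on $k\geq 1$ then gives $G_k=H_k$ for all $k\geq 1$.

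The main technical hurdle will be this third step: the simultaneous substitutions $x\mapsto 1/x$ and $z\mapsto-z$, together with the sign flip in $\theta_y=-\theta_x$, make sign bookkeeping fiddly, and it is here that most of the risk of error lies.
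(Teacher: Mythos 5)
Your proof is correct, but it takes a genuinely different route from the paper's. The paper works at the level of coefficients: it matches the recurrence for $g_k(n)=\js(n,n-k;z)$ against \eqref{pJS} to get the termwise identity $g_k(n;z)=(-1)^kf_k(-n;-z)$, and then invokes Stanley's reciprocity theorem for generating functions of polynomial sequences (\cite[Proposition~4.2.3]{st1}), which is legitimate because Lemma~\ref{rational:22} has already established that $f_k(n;z)$ is a polynomial in $n$. You instead work at the level of the generating functions themselves: the functional recurrences $(1-x)F_k=\theta(\theta+z)F_{k-1}$ and $(1-x)G_k=x\,\theta(\theta+z)G_{k-1}$ both check out (the first is exactly \eqref{eq:pf} rewritten via $xD(x^{1-z}D(x^z\cdot))=\theta(\theta+z)$), the sign bookkeeping under $x\mapsto 1/x$, $z\mapsto -z$, $\theta\mapsto-\theta$ is right, and the base case is handled correctly since $H_0-G_0=-1$ is annihilated by $\theta(\theta+z)$, forcing $G_1=H_1$. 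What the paper's argument buys is brevity, given that reciprocity and polynomiality are already on the table; what yours buys is independence from that machinery (you only need $F_k$ to be a rational function so that the substitution $y=1/x$ makes sense, which follows from your own recurrence and $F_0=1/(1-x)$), plus a transparent explanation of why the identity holds for $k\geq1$ but fails at $k=0$. The only cosmetic caveat is that your proposal is phrased conditionally ("should give", "should produce"); each of those claimed identities does in fact hold, so in a final write-up you would simply carry out the three displayed computations.
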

\begin{proof}
Let $g_k(n)=\js(n,n-k;z)$. Then by  recursive formulas~\eqref{eq:ja0}, for $k\geq0$, we have 
\begin{equation*}
g_k(n)=g_k(n-1)+(n-1)(n-1+z)g_{k-1}(n-1;z).
\end{equation*}
Comparing this with~\eqref{pJS} we get 
$$
g_k(n;z)=(-1)^kf_k(-n;-z).
$$
The result follows from this relation and the standard results of generating functions (cf.~\cite[Proposition~4.2.3]{st1}).
\end{proof}

\begin{proof}[{\bf Proof of Theorem~\ref{PF:diagonal}}]
By Lemma~\ref{js-JS}, we only need to prove the theorem for the sequence $\{\JS(n+k,n;z)\}_{n\geq0}$.
 When $-1\leq z\leq1$, it follows from Lemmas~\ref{PF:JSD} and~\ref{rfpo} that the sequence $\{\JS(n+k,n;z)\}_{n\geq0}$ is a PF sequence. This proves the ``if'' side of the theorem.

It remains to show the ``only if" side. When $z>1$, by~\eqref{A_1}, the polynomial $A_1(x;z)$ has a positive root $\frac{z+1}{z-1}$. Thus, by Rolle's Theorem and relationship~\eqref{Bkx}, the polynomial $B_{1}(x;z)$ has a positive  root, and so does $A_2(x;z)$ by relationship~\eqref{Akx}. It follows by induction on $k$ and the two relationships~\eqref{Bkx} and~\eqref{Akx} that $A_k(x;z)$ has a positive root for any integer $k\geq1$. The ``only if" side of the theorem then follows from Lemmas~\ref{rfpo} and~\ref{rational:22}.
\end{proof}
\begin{corollary}
The two sequences  $\{T(2(n+k),2n)\}_{n\geq0}$ and $\{t(2(n+k),2n)\}_{n\geq0}$ are PF sequences. 
\end{corollary}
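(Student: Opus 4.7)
The plan is to recognize this as an immediate specialization of Theorem~\ref{PF:diagonal} via the identification of central factorial numbers as the constant terms of Jacobi-Stirling numbers. First I would recall the identities noted in the remark preceding Lemma~\ref{js-JS}, namely
\[
T(2n,2k)=\JS(n,k;0)\quad\text{and}\quad t(2n,2k)=\js(n,k;0),
\]
so that
\[
T(2(n+k),2n)=\JS(n+k,n;0)\quad\text{and}\quad t(2(n+k),2n)=\js(n+k,n;0).
\]

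Second, since $z=0$ lies in the interval $[-1,1]$, Theorem~\ref{PF:diagonal} applies and yields that for each fixed $k\geq 1$ both diagonal sequences $\{\JS(k+n,n;0)\}_{n\geq 0}$ and $\{\js(k+n,n;0)\}_{n\geq 0}$ are P\'olya frequency sequences. Combined with the substitutions above this delivers the corollary for $k\geq 1$. For $k=0$, both sequences reduce to the constant sequence $1,1,1,\ldots$ (since $\JS(n,n;z)=\js(n,n;z)=1$), which is trivially a PF sequence, so this edge case requires no work.

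There is no real obstacle here; the entire content of the corollary is already encoded in Theorem~\ref{PF:diagonal}, and the only point worth verifying carefully is that $z=0$ indeed lies in the interior of the admissible range, so that both the necessary and sufficient direction of Theorem~\ref{PF:diagonal} apply and the positivity requirement of Lemma~\ref{rfpo} is satisfied for the nonnegative integer sequences $T(2(n+k),2n)$ and $t(2(n+k),2n)$.
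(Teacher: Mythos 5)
Your proposal is correct and matches the paper's (implicit) argument exactly: the corollary is the specialization $z=0\in[-1,1]$ of Theorem~\ref{PF:diagonal} combined with the identities $T(2n,2k)=\JS(n,k;0)$ and $t(2n,2k)=\js(n,k;0)$ from the remark. The only superfluous touch is your appeal to the ``only if'' direction and to Lemma~\ref{rfpo} at the end --- only the ``if'' direction of the theorem is needed --- but this does not affect correctness.
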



\section{$z$-total positivity  of Jacobi-Stirling numbers}
\label{total:jacobi}
In this section, we show that some $z$-total positivity properties of Jacobi-Stirling numbers follow directly from the $\x$-total positivity properties of the elementary and complete homogeneous symmetric functions.
We begin with the observation that, similar to the classical Stirling numbers, the Jacobi-Stirling numbers  are also specializations of the two symmetric functions.

The {\em elementary} and {\em complete homogeneous symmetric functions} of degree $k$ in variables $x_1, x_2,\ldots, x_n$ are defined by 
\begin{align*}
&e_k(n):=e_k(x_1,x_2,\ldots,x_n)=\sum_{i_1< i_2<\ldots< i_k}x_{i_1}x_{i_2}\ldots x_{i_k},\\
&h_k(n):=h_k(x_1,x_2,\ldots,x_n)=\sum_{i_1\leq i_2\leq\ldots\leq i_k}x_{i_1}x_{i_2}\ldots x_{i_k},
\end{align*}
where $e_0(n)=k_0(n)=1$ and $e_k(n)=0$ for $k>n$. 
It is easy to deduce from the definition of $e_k(n)$ and $h_k(n)$ that
\begin{align*}
&e_k(n)=e_k(n-1)+x_ne_{k-1}(n-1),\\
&h_k(n)=h_k(n-1)+x_nh_{k-1}(n).
\end{align*}
As noticed by Mongelli~\cite{mon2},
comparing with \eqref{eq:ja0} and \eqref{eq:ja} 
one  gets immediately
the following  identities:
for $n\geq k\geq0$,
\begin{align}
&\js(n,k;z)=e_{n-k}(1(1+z),2(2+z),\ldots,(n-1)(n-1+z))\label{sjs},\\
&\JS(n,k;z)=h_{n-k}(1(1+z),2(2+z),\ldots,k(k+z))\label{sJS}.
\end{align}

The following result is due to 
Sagan~\cite[Theorem~4.4]{sa}.
\begin{lemma}
\label{pr:sy}
Let $\{x_i\}_{i\geq 1}$ be a sequence of polynomials in $q$ with nonnegative coefficients. Then, for $k\leq l$ and $m\leq n$,
$$
\text{$(i)$}\quad  e_{k-1}(n)e_{l+1}(m)\leq_q e_k(n)e_l(m);\quad
\text{$(ii)$}\quad  h_{k-1}(n)h_{l+1}(m)\leq_q h_k(n)h_l(m).
$$
Moreover, if the sequence $\{x_i\}_{i\geq 1}$ is strongly $q$-log-concave, then 
$$
\text{$(iii)$}\quad e_{k}(n+1)e_{l}(m-1)\leq_q e_k(n)e_l(m);\quad
\text{$(iv)$}\quad h_{k}(n+1)h_{l}(m-1)\leq_q h_k(n)h_l(m).
$$
\end{lemma}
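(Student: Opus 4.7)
The plan is to approach all four inequalities by induction on $n+m$, using the standard recurrences
\[e_k(n)=e_k(n-1)+x_n e_{k-1}(n-1),\qquad h_k(n)=h_k(n-1)+x_n h_{k-1}(n)\]
to peel off the variable of largest index and reduce to smaller instances.

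For part (i), consider $D:=e_k(n)e_l(m)-e_{k-1}(n)e_{l+1}(m)$. When $n>m$, the variable $x_n$ appears only in the left factors; expanding via the recurrence gives $D=D'+x_n D''$ where both $D'$ and $D''$ are smaller instances of (i) (with $n$ replaced by $n-1$ and index pairs $(k,l)$ and $(k-1,l)$ respectively). When $n=m$, the recurrence is applied to both factors simultaneously, yielding $D=D_0+x_n D_1+x_n^2 D_2$; the terms $D_0$ and $D_2$ are direct instances of (i), while $D_1=e_k(n-1)e_{l-1}(n-1)-e_{k-2}(n-1)e_{l+1}(n-1)$ has an index gap one larger than that of (i) and must be treated by chaining two applications of (i) through the intermediate term $e_{k-1}(n-1)e_l(n-1)$ (which uses $k\leq l$). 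Part (ii) is handled analogously; the slightly different form of the $h$-recurrence (with $h_{k-1}(n)$ rather than $h_{k-1}(n-1)$ on the right) means the $n=m$ decomposition has only two pieces, $D_0+x_n D_1$.

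For parts (iii) and (iv), the index shift $(n,m)\mapsto(n+1,m-1)$ preserves $n+m$, so pure induction does not suffice; this is where strong $q$-log-concavity of $\{x_i\}$ enters. For (iii), the recurrences yield the identity
\[e_k(n)e_l(m)-e_k(n+1)e_l(m-1)=x_m e_k(n)e_{l-1}(m-1)-x_{n+1}e_{k-1}(n)e_l(m-1).\]
I would then expand both products monomially and set up an injection from the monomials of $x_{n+1}e_{k-1}(n)e_l(m-1)$ into those of $x_m e_k(n)e_{l-1}(m-1)$, matching each monomial of the form $x_{n+1}x_j\cdot(\text{common factors})$ with $x_m x_{j'}\cdot(\text{same common factors})$, where $(n+1,j)$ has been replaced by a sum-preserving, more balanced pair $(m,j')$ (i.e.\ $m+j'=n+1+j$ with $|m-j'|\le|n+1-j|$). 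The inequality $x_{n+1}x_j\leq_q x_m x_{j'}$ then follows by iterated application of strong $q$-log-concavity of $\{x_i\}$. Part (iv) is analogous using the $h$-recurrence.

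The main obstacle is twofold. First, the $n=m$ case of (i) requires the chain comparison for $D_1$ through the intermediate term $e_{k-1}(n-1)e_l(n-1)$, which makes essential use of $k\leq l$. Second, in (iii) and (iv) one cannot simply trade $x_{n+1}$ for $x_m$ (which would demand the unavailable $x_m\geq_q x_{n+1}$), but must instead swap a \emph{pair} of indices so that strong $q$-log-concavity is applied on a sum-preserving pair rather than a single factor; constructing a well-defined injection that exhausts the relevant monomials and respects this swap is the delicate combinatorial step.
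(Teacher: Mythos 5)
The paper offers no proof of this lemma to compare against: it is quoted directly from Sagan \cite[Theorem~4.4]{sa}, so your proposal has to stand on its own. Your plan for (i) and (ii) is essentially sound. The three-term decomposition in the $e$-case with $n=m$, the cancellation of the cross terms, and the two-step chain $e_{k-2}e_{l+1}\leq_q e_{k-1}e_l\leq_q e_ke_{l-1}$ for the middle coefficient (trivially an equality when $k=l$) all check out. One repair is needed in (ii): because the recurrence $h_k(n)=h_k(n-1)+x_nh_{k-1}(n)$ does not lower $n$ in its second term, the case $n>m$ produces the instance $h_{k-1}(n)h_l(m)-h_{k-2}(n)h_{l+1}(m)$ with the \emph{same} value of $n+m$, so induction on $n+m$ alone does not terminate; you must induct on something like $n+m+k+l$ (that term lowers $k+l$). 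With that adjustment, (i) and (ii) go through.

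For (iii) and (iv) there is a genuine gap. The identity $e_k(n)e_l(m)-e_k(n+1)e_l(m-1)=x_me_k(n)e_{l-1}(m-1)-x_{n+1}e_{k-1}(n)e_l(m-1)$ is correct, and iterated strong $q$-log-concavity applied to a sum-preserving, more balanced pair is indeed the right tool for comparing $x_{n+1}x_j$ with $x_mx_{j'}$, $j'=n+1+j-m$. But the injection itself, which is the entire content of the proof, is never constructed, and the obvious candidate fails. A term of $x_{n+1}e_{k-1}(n)e_l(m-1)$ is indexed by a pair $(S,T)$ with $S\subseteq[n]$, $|S|=k-1$, $T\subseteq[m-1]$, $|T|=l$; your swap deletes some $j$ from $T$ and must insert $j'=n+1+j-m$ into $S$, and $j'$ may already lie in $S$. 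Concretely, for $k=l=2$, $n=m=3$, the term with $S=\{3\}$, $T=\{1,2\}$ cannot use $j=2$ (since $j'=3\in S$) and must take $j=1$, while the term with $S=\{2\}$ must take $j=2$; one then has to show that such case-dependent choices can always be made and remain globally injective. Handling these collisions (and the analogous multiset bookkeeping for (iv), where the corresponding identity involves $h_{k-1}(n+1)$ and $h_{l-1}(m)$ rather than arguments $n$ and $m-1$) is precisely the technical core of Sagan's argument, and the proposal stops exactly where that work begins.
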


\begin{proof} [{\bf Proof of Theorem~\ref{th:ja}~(i)}]
By Lemma~\ref{pr:sy} (ii) and (iv), if $x_i\in \R_+[z]$ and 
the sequence  $\{x_i\}_{i\geq 1}$ is strongly $z$-log-concave, then
\begin{align}\label{dis}
h_k(n)h_l(m)\geq_zh_{k-1}(n)h_{l+1}(m)\geq_zh_{k-1}(n+1)h_{l+1}(m-1)
\end{align}
for $k\leq l$ and $m\leq n$. As the sequence $\{i(i-1+z)\}_{i\geq1}$ is strongly $z$-log-concave, 
namely,
\begin{align*}
k(k-1+z)l(l-1+z)-(k-1)(k-2+z)(l+1)(l+z)\in \N[z]
\end{align*}
for $k\geq 1$ and $k\leq l$,
it follows from the specialization~\eqref{sJS} and \eqref{dis} that
\begin{align*}
\JS(n+k,n;z-1)\JS(m+l,m;z-1)\geq_z\JS(n+k,n+1;z-1)\JS(m+l,m-1;z-1)
\end{align*}
for $k\leq l$ and $m\leq n$, which implies the strong $z$-log-concavity of the sequence $\{\JS(n,k;z-1)\}_{k=1}^n$.
\end{proof}

\begin{remark}
Theorem~\ref{th:ja}~(i) generalizes the following log-concavity result of
Andrews et al.~\cite{aegl} and Mongelli~\cite{mo}: the sequence 
 $\{\JS(n,k;z-1)\}_{k=1}^n$ is log-concave  when  $z\geq 0$ is a  real number.
 They both proved the above result by showing  that the polynomial 
$$f_n(x)=\sum_{k\geq0}\JS(n,k;z-1)x^k$$
 has only real simple nonpositive zeros. Note that $\{a_i\}_{i=0}^d$ is a PF sequence  if and only if the polynomial $\sum_{i=0}^d a_ix^i$ has only real zeros. As the later result implies also 
that $\{\JS(n,k;z-1)\}_{k=1}^n$ is a PF sequence,  it would be interesting to see whether $\{\JS(n,k;z-1)\}_{k=1}^n$ is a
 $z$-PF sequence or not.
\end{remark}

The following theorem  was mentioned in \cite{sa} without proof. 
For convenience we include a proof.

\begin{lemma}\label{xPF}For any $n\geq1$, the two sequences
$\{h_k(n)\}_{k\geq0}$ and $\{e_k(n)\}_{k\geq0}$  are ${\bf x}$-PF sequences.
\end{lemma}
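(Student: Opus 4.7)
The plan is to identify every minor of the relevant Toeplitz matrix as a (possibly zero) skew Schur polynomial in $x_1,\ldots,x_n$, which by the semistandard Young tableaux expansion lies in $\N[x_1,\ldots,x_n]\subset\R_+[{\bf x}]$.

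Fix row indices $r_1<r_2<\cdots<r_k$ and column indices $c_1<c_2<\cdots<c_k$ from $\{0,1,2,\ldots\}$, and let $M$ be the resulting $k\times k$ submatrix of $(h_{j-i}(n))_{i,j\geq 0}$, whose $(a,b)$-entry is $h_{c_b-r_a}(n)$. If $c_a<r_a$ for some $a$, then for every row index $r\geq r_a$ and every column index $c\leq c_a$ we have $c<r$, so the lower-left $(k-a+1)\times a$ block of $M$ vanishes; since $(k-a+1)+a>k$, this forces $\det M=0$. Otherwise $c_a\geq r_a$ for every $a$, and one may choose a constant $C$ large enough that
\[
\lambda_a:=a-r_a+C,\qquad \mu_a:=a-c_a+C\qquad(1\leq a\leq k)
\]
are all nonnegative integers. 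Because $r_\bullet$ and $c_\bullet$ are strictly increasing, $\lambda$ and $\mu$ are partitions, and $\lambda\supseteq\mu$ is equivalent to the assumption $c_a\geq r_a$. Moreover $\lambda_a-\mu_b-a+b=c_b-r_a$, so the Jacobi--Trudi identity yields
\[
\det M=\det\bigl(h_{\lambda_a-\mu_b-a+b}(n)\bigr)_{1\leq a,b\leq k}=s_{\lambda/\mu}(x_1,\ldots,x_n)\in\N[x_1,\ldots,x_n].
\]
Hence $(h_k(n))_{k\geq 0}$ is an ${\bf x}$-PF sequence.

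The argument for $(e_k(n))_{k\geq 0}$ is entirely parallel: one invokes the dual Jacobi--Trudi identity $s_{\lambda'/\mu'}=\det(e_{\lambda_a-\mu_b-a+b})$ to identify each minor of $(e_{j-i}(n))_{i,j\geq 0}$ with a skew Schur polynomial indexed by the conjugate shape, whose coefficients are again nonnegative by the SSYT interpretation.

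The only mildly technical point I expect is the index bookkeeping that brings an arbitrary minor $\det(h_{c_b-r_a})$ into the Jacobi--Trudi form $\det(h_{\lambda_a-\mu_b-a+b})$ with a genuine pair of partitions $\lambda\supseteq\mu$; once this identification is set up, the ${\bf x}$-positivity of every minor is automatic from the combinatorial definition of skew Schur polynomials.
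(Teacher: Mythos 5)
Your proof is correct and follows essentially the same route as the paper: both arguments reduce an arbitrary minor of the Toeplitz matrix to the Jacobi--Trudi determinant of a skew Schur polynomial (your shift constant $C$ plays the role of the paper's normalization $j_d-d$), both dispose of the non-nested case by exhibiting a vanishing block that forces the determinant to be zero, and both handle $\{e_k(n)\}$ via the dual Jacobi--Trudi identity. No changes needed.
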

\begin{proof}
Choose any minor $M$ of the matrix $(h_{j-i}(n))_{i,j\in\N}$, say with rows $i_1,\ldots,i_d$ and columns $j_1,\ldots,j_d$. Define partitions $\lambda$ and $\mu$ by 
\begin{align*}
\lambda_k=j_d-i_k-d+k\ \ \text{and}\ \ \mu_k=j_d-j_k-d+k
\end{align*}
for $1\leq k\leq d$.
Then 
$$
M=(h_{\lambda_k-\mu_l-k+l}(n))_{k,l=1}^d.
$$
 If $\lambda_k\geq\mu_k$ for $1\leq k\leq d$, then by the Jacobi-Trudi identity~\cite[\S 7.16]{st2}, we have 
 $$\det(M)=s_{\lambda/\mu}(x_1, \ldots, x_n),
 $$
  where $s_{\lambda/\mu}$  is a polynomial in $x_1, \ldots, x_n$ with nonnegative coefficients. Otherwise, suppose $r$ is the smallest index such that $\lambda_r<\mu_r$, then
$
\det(M)=0
$
follows from the observation that $\lambda_k<\mu_l$ for all $k\geq r$ and $l\leq r$.
This completes the proof. The proof for $\{e_k(n)\}_{k\geq0}$ is similar, but using the dual Jacobi-Trudi identity~\cite[\S 7.16]{st2}.
\end{proof}

\begin{lemma}\label{le:dual}
A finite sequence $\{f_0({\bf x}),f_1({\bf x}),\ldots,f_d({\bf x})\}$ is an ${\bf x}$-PF sequence if and only if $\{f_d({\bf x}),f_{d-1}({\bf x}),\ldots,f_0({\bf x})\}$ is an ${\bf x}$-PF sequence.
\end{lemma}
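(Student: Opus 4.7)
The plan is to realize the Toeplitz-like matrix of the reversed sequence as a transpose-plus-column-shift of the Toeplitz-like matrix of the original sequence, so that every minor of one is literally a minor of the other. Write $g_k=f_{d-k}$ for $0\le k\le d$ and $g_k=0$ otherwise, and set $M=(f_{j-i})_{i,j\ge 0}$, $N=(g_{j-i})_{i,j\ge 0}$. The first step is the direct identity
$$
N_{i,j}=g_{j-i}=f_{d-(j-i)}=f_{d+i-j}=M_{j,\,i+d},
$$
valid for all $i,j\ge 0$ (note $i+d\ge 0$, so no illegal indices appear).

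Next, I would pick an arbitrary $k\times k$ minor of $N$ on rows $i_1<\cdots<i_k$ and columns $j_1<\cdots<j_k$, and rewrite its $(a,b)$-entry using the identity above as $M_{j_b,\,i_a+d}$. Transposing inside the determinant (which leaves $\det$ invariant) shows
$$
\det\bigl(N_{i_a,j_b}\bigr)_{a,b=1}^{k}=\det\bigl(M_{j_b,\,i_a+d}\bigr)_{a,b=1}^{k}=\det\bigl(M_{j_a,\,i_b+d}\bigr)_{a,b=1}^{k},
$$
which is precisely the minor of $M$ on rows $j_1<\cdots<j_k$ and columns $i_1+d<\cdots<i_k+d$. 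By hypothesis $M$ is $\mathbf{x}$-totally positive, so this minor lies in $\R_+[\mathbf{x}]$, and therefore so does the chosen minor of $N$. This proves that if $\{f_0,\ldots,f_d\}$ is an $\mathbf{x}$-PF sequence, then so is $\{f_d,\ldots,f_0\}$.

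The converse is immediate because reversing the reversed sequence recovers the original, so the implication already proved applies in both directions. I do not expect any real obstacle here; the only thing to double-check is that the column shift $j\mapsto j+d$ produces a legitimate sub-collection of columns of $M$ and that the correspondence above enumerates every minor of $N$, both of which are evident from the formula $N_{i,j}=M_{j,i+d}$.
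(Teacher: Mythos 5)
Your proof is correct and is essentially the paper's argument made explicit: the paper simply invokes the fact that a matrix is $\mathbf{x}$-totally positive if and only if its transpose is, and your identity $N_{i,j}=M_{j,\,i+d}$ is exactly the bookkeeping that justifies this, exhibiting every minor of the reversed sequence's matrix as a minor of the transpose of the original one (with a harmless column shift by $d$). The converse-by-double-reversal step matches the paper's implicit symmetry as well.
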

\begin{proof}
By definition, a sequence $\{f_0({\bf x}),f_1({\bf x}),\ldots,f_d({\bf x})\}$ is an ${\bf x}$-PF sequence if all the minors of the matrix $(f_{j-i})_{1\leq i,j\leq n}$ are ${\bf x}$-nonnegative.
The result follows then from the fact that a matrix is  ${\bf x}$-totally positive if and only if its transpose is  ${\bf x}$-totally positive.
\end{proof}

\begin{proof} [{\bf Proof of Theorem~\ref{th:ja}~(ii)~and~(iii)}]
(ii) This follows 
immediately from the fact that $\{h_k(n)\}_{k\geq0}$ is an ${\bf x}$-PF sequence (Lemma~\ref{xPF})  and the specialization~\eqref{sJS}.

(iii) From the fact that $\{e_k(n)\}_{k\geq0}$ is an ${\bf x}$-PF sequence (Lemma~\ref{xPF})
 and~\eqref{sjs}, we see that $\{\js(n,n-k;z-1)\}_{k=0}^{n-1}$ is a $z$-P\'olya frequency sequence for any fixed $n\in\N$.
The result then follows from Lemma~\ref{le:dual}.
\end{proof}

\begin{proof}[{\bf Proof of Theorem~\ref{total:jc}}]
Fix $r,l,m\in\N$. It is well known \cite[Theorem 5.4]{sa}
that  the following matrices 
\begin{align*}
(e_{j-ri}(li+m))_{i,j\in\N}\quad\text{and}\quad
(h_{j-ri}(li+m))_{i,j\in\N}
\end{align*}
are ${\bf x}$-totally positive. The $z$-total positivity of the matrices 
$$
(\JS((l-r)i+j+m,li+m;z-1))_{i,j\geq0},\quad(\js(li+m,(r+l)i-j+m;z-1))_{i,j\geq0}
$$
 follows immediately from~\eqref{sJS} and~\eqref{sjs}. This   implies that the matrices $(\JS(n,k;z-1))_{n,k\geq0}$,  
 $(\js(n,n-k;z-1))_{n,k\geq0}$ are  $z$-totally positive.

It is known~\cite{gz} that the Jacobi-Stirling numbers are the connection coefficients of the bases
$\{x^n\}_n$ and $\{\prod_{i=0}^{n-1}(x-i(z+i))\}_n$,  namely,
\begin{gather}
x^n=\sum_{k=0}^{n}\JS(n,k;z)\prod_{i=0}^{k-1}(x-i(z+i)),\nonumber\\
\prod_{i=0}^{n-1}(x-i(z+i))=\sum_{k=0}^{n}(-1)^{n+k}\js(n,k;z)x^k.\label{JS:first}
\end{gather}
It follows that the inverse of  the matrix  $(\JS(n,k;z-1))_{n,k\geq0}$  is $(\js(n,k;z-1))_{n,k\geq0}$, up to deletion of signs. 
As the inverse of a totally positive matrix (with polynomial entries), up to deletion of signs in all entries, is also totally positive (cf.~\cite[Proposition~1.6]{pi}),  the matrix$(\js(n,k;z-1))_{n,k\geq0}$ is then $z$-totally positive.
\end{proof}

\begin{remark}
Theorem~\ref{th:ja}~(ii),~(iii) and Theorem~\ref{total:jc} are $z$-analogs  of~\cite[Theorem~5, Propositions~2 and~3]{mo}.
\end{remark}


\section{Strongly $\x$-log-convex polynomial sequences}
\label{stro:log-conv}
In this section, we investigate the log-convexity property of  the polynomials 
\begin{equation*}
\sum_{k=0}^n \JS(n,k;z)y^k,\quad \sum_{k=0}^n \js(n,k;z)y^k,\quad \sum_{k=0}^nQ_{n,k}(x,t)y^k.
\end{equation*}
We first establish a general result. 
\begin{lemma}  \label{le: 2}
For positive integers $n$ and $k$ we define 
 polynomials  $T_{n,k}$ in $\R_+[\x]$ by
\begin{equation}\label{coeffi:convex}
T_{n,k}=a_{n,k}T_{n-1,k}+b_{n,k}T_{n-1,k-1},\quad\text{for $1\leq k\leq n$},
\end{equation}
and the boundary conditions
$T_{0,0}=1$ and $T_{n,-1}=T_{n,n+1}=0$ for $n\geq1$.
\begin{itemize}
\item[(i)]
If the sequence $\{T_{n,k}\}_{k=0}^n$ is strongly ${\bf x}$-log-concave for each $n$ and 
\begin{equation}\label{ank:bnk}
a_{n,k}\geq_{\x} a_{n,k-1}\geq_{{\bf x}}0,\quad b_{n,k}\geq_{\x} b_{n,k-1}\geq_{{\bf x}}0\quad\text{for $1\leq k\leq n$},
\end{equation}
then  
\begin{equation*}
T_{m,k}T_{n,l} \geq_{{\bf x}} T_{m,l}T_{n,k}
\end{equation*}
for $0\leq m\leq n$ and $0\leq k\leq l$. 
\item[(ii)] Moreover, for fixed $j\geq0$ and $n\geq m\geq0$, if 
$
c_{j,i}\geq_{\x} c_{j,i-1}
$
for $i\geq1$,
then
$$
\sum_{i=0}^j (c_{j,j-i}-c_{j,i})T_{n,j-i}T_{m,i}\geq_{\x}0.
$$
\end{itemize}
Here $a_{n,k}, b_{n,k}$ and $c_{n,k}$ are polynomials in $\R[\x]$.
\end{lemma}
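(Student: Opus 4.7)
The plan is to treat (i) by induction on the ``row gap'' $n - m \geq 0$ and to deduce (ii) from (i) by a symmetrization. For (i), the base case $n = m$ is immediate since $T_{m,k}T_{m,l} - T_{m,l}T_{m,k} = 0$. For the inductive step $n > m$, I would apply the recurrence~\eqref{coeffi:convex} to both $T_{n,l}$ and $T_{n,k}$ and regroup the four resulting terms so that the ``inductive'' pieces and the ``monotonicity'' pieces separate.

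Explicitly, one finds
\begin{align*}
T_{m,k}T_{n,l} - T_{m,l}T_{n,k} &= a_{n,k}\bigl(T_{m,k}T_{n-1,l} - T_{m,l}T_{n-1,k}\bigr) \\
&\quad + b_{n,k}\bigl(T_{m,k}T_{n-1,l-1} - T_{m,l}T_{n-1,k-1}\bigr) \\
&\quad + (a_{n,l}-a_{n,k})T_{m,k}T_{n-1,l} + (b_{n,l}-b_{n,k})T_{m,k}T_{n-1,l-1}.
\end{align*}
The first two brackets are $\geq_{\x} 0$ by the induction hypothesis applied to the pair $(m, n-1)$—whose gap is strictly smaller—with the index pairs $(k, l)$ and $(k-1, l-1)$, each still obeying the ordering $0 \leq k' \leq l'$. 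The last two summands are $\geq_{\x} 0$ by the monotonicity assumption~\eqref{ank:bnk} together with $k \leq l$. Boundary effects are absorbed uniformly by the conventions $T_{n,-1} = 0$ and $T_{n,j} = 0$ for $j > n$: if $l > m$ then $T_{m,l} = 0$ and the desired inequality collapses to nonnegativity of a product in $\R_+[\x]$, while $l > n-1$ or $k = 0$ merely zero out individual summands without affecting the sign analysis.

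For (ii), I would symmetrize the sum via the involution $i \leftrightarrow j - i$, namely
\[
2\sum_{i=0}^{j}(c_{j,j-i} - c_{j,i})T_{n,j-i}T_{m,i} \;=\; \sum_{i=0}^{j}(c_{j,j-i} - c_{j,i})\bigl(T_{n,j-i}T_{m,i} - T_{n,i}T_{m,j-i}\bigr).
\]
For each $i < j/2$ both factors are $\geq_{\x} 0$: the monotonicity of $c_{j,\cdot}$ yields $c_{j,j-i} \geq_{\x} c_{j,i}$, while part (i) applied with $m \leq n$ and $k = i \leq j-i = l$ gives $T_{m,i}T_{n,j-i} \geq_{\x} T_{m,j-i}T_{n,i}$. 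For $i > j/2$ both factors flip sign simultaneously, so the product remains $\geq_{\x} 0$; and for $i = j/2$ (only possible when $j$ is even) both factors vanish. Termwise nonnegativity of the symmetrized sum then yields the claim.

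The main technical obstacle is bookkeeping the boundary cases in (i)—checking that the four-term decomposition is valid when some index touches $-1$ or exceeds the row bound—which, as indicated, is handled uniformly by the stated conventions. I note that the strong $\x$-log-concavity hypothesis is not explicitly consumed in the argument above; it is listed because it is the natural companion assumption in the setting where this lemma will be applied in subsequent sections.
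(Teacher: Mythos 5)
Your algebraic identity and your treatment of part (ii) are fine, but the justification of the second bracket in part (i) is wrong, and the error is fatal. The quantity $T_{m,k}T_{n-1,l-1}-T_{m,l}T_{n-1,k-1}$ is \emph{not} an instance of the inductive statement for the pair $(m,n-1)$: that statement with indices $(k',l')$ reads $T_{m,k'}T_{n-1,l'}-T_{m,l'}T_{n-1,k'}\geq_{\x}0$, whereas your bracket carries the indices $k,l$ on row $m$ but the shifted indices $k-1,l-1$ on row $n-1$; it is a ``shifted'' $2\times 2$ minor. In particular, when $m=n-1$ it equals $T_{m,k}T_{m,l-1}-T_{m,l}T_{m,k-1}$, which is precisely the strong $\x$-log-concavity of the row $\{T_{m,k}\}_k$ --- exactly the hypothesis you declare unused. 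The lemma is in fact false without that hypothesis: take $T_{1,0}=T_{1,1}=1$, then $a_{2,k}\equiv 1$, $b_{2,0}=b_{2,1}=0$, $b_{2,2}=2$ (so the monotonicity \eqref{ank:bnk} holds) produces the non-log-concave row $T_{2,\cdot}=(1,1,2)$; continuing with $a_{3,k}=b_{3,k}\equiv 1$ gives $T_{3,\cdot}=(1,2,3,2)$ and $T_{2,1}T_{3,2}-T_{2,2}T_{3,1}=3-4=-1<0$, violating the conclusion of (i). In this example your second bracket for $(m,n,k,l)=(2,3,1,2)$ is $T_{2,1}T_{2,1}-T_{2,2}T_{2,0}=-1$, so that is exactly where the argument breaks.

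To repair the step one must establish the auxiliary inequality $T_{n,k}T_{m,l}-T_{n,k-1}T_{m,l+1}\geq_{\x}0$ for $m\leq n$ and $k\leq l$: your shifted bracket decomposes as an unshifted minor (handled by the induction hypothesis with indices $(k,l-1)$) plus a term of this auxiliary form. This is precisely what the paper's proof does: it runs a second induction on $n$ for the auxiliary claim, again using the recurrence \eqref{coeffi:convex} and the monotonicity \eqref{ank:bnk}, with the base case $n=m$ supplied exactly by the strong $\x$-log-concavity of each row. The rest of your write-up --- the base case, the boundary conventions, and the symmetrization for (ii), which coincides with the paper's reduction of (ii) to (i) --- is sound.
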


\begin{proof} 
Note that (i) implies  (ii) because 
\begin{align*}
\sum_{i=0}^j (c_{j,j-i}-c_{j,i})T_{n,j-i}T_{m,i}
=\sum_{i=0}^{\lfloor\frac{j}{2}\rfloor}(c_{j,j-i}-c_{j,i})(T_{n,j-i}T_{m,i}-T_{n,i}T_{m,j-i}).
\end{align*}
So we just need to prove  (i).

When $n=m$ or $k=l$, there is nothing to prove. So we suppose that $n>m$ and $l>k$ and proceed by induction on $n$. 
From recurrence relation \eqref{coeffi:convex}, we see that
\begin{align*} 
&T_{m,k}T_{n+1,l} - T_{m,l}T_{n+1,k}\\
=&T_{m,k}(a_{n+1,l}T_{n,l}+b_{n+1,l}T_{n,l-1})
-T_{m,l}(a_{n+1,k}T_{n,k}+b_{n+1,k}T_{n,k-1})\\
\geq&_{{\bf x}}a_{n+1,l}(T_{m,k}T_{n,l}-T_{m,l}T_{n,k})
+b_{n+1,l}(T_{m,k}T_{n,l-1}-T_{m,l}T_{n,k-1})\quad(\text{by \eqref{ank:bnk}})\\
=&a_{n+1,l}(T_{m,k}T_{n,l}-T_{m,l}T_{n,k})\\
&+b_{n+1,l}[(T_{m,k}T_{n,l-1}-T_{m,l-1}T_{n,k})+(T_{m,l-1}T_{n,k}-T_{m,l}T_{n,k-1})],
\end{align*}
which is in $\R_+[\x]$  by the induction hypothesis provided  that, for $1\leq m\leq n$ and $1\leq k\leq l$,
\begin{equation}\label{eq:1}
T_{n,k}T_{m,l}-T_{n,k-1}T_{m,l+1}\geq_{{\bf x}}0.
\end{equation}
 It remains to prove \eqref{eq:1}. We proceed  by induction on $n$. 
 As the sequence $\{T_{n,k}\}_{k=0}^n$ is strongly $\x$-log-concave, by definition,
\begin{align}\label{eq:2}
T_{n,k}T_{n,l}-T_{n,k-1}T_{n,l+1}\geq_{\x}0,
\end{align}
so, the claim is true for  $n=m$. Assume that  $n\geq m$. By recurrence relation~\eqref{coeffi:convex}, we see that
\begin{align*} 
T_{n+1,k}&T_{m,l}-T_{n+1,k-1}T_{m,l+1}\\
=&a_{n+1,k}T_{n,k}T_{m,l}+b_{n+1,k}T_{n,k-1}T_{m,l}
-a_{n+1,k-1}T_{n,k-1}T_{m,l+1}-b_{n+1,k-1}T_{n,k-2}T_{m,l+1}\\
\geq&_{{\bf x}}a_{n+1,k}(T_{n,k}T_{m,l}-T_{n,k-1}T_{m,l+1})
+b_{n+1,k}(T_{n,k-1}T_{m,l}-T_{n,k-2}T_{m,l+1}),\quad(\text{by \eqref{ank:bnk}})
\end{align*}
which is in $\R_+[\x]$  by \eqref{eq:2} and the induction hypothesis. This completes the proof of the claim \eqref{eq:1}.
\end{proof}

\subsection{Proof of Theorem~\ref{bell:JS}}
(i) 
Let $J_n(z,y)=\sum_{k=0}^n\JS(n,k;z)y^k$.
 In view of recurrence relation~\eqref{eq:ja}, we have 
\begin{align*}
&J_{m-1}(z,y)J_{n+1}(z,y)-J_{m}(z,y)J_{n}(z,y)\\
=&J_{m-1}(z,y)\sum_{k=0}^{n+1}[\JS(n,k-1;z)+k(k+z)\JS(n,k;z)]y^k\\
&-J_n(z,y)\sum_{k=0}^{m}[\JS(m-1,k-1;z)+k(k+z)\JS(m-1,k;z)]y^k\\
=&J_{m-1}\sum_{k=0}^{n}k(k+z)\JS(n,k;z)y^k-J_n(z,y)\sum_{k=0}^{m-1}k(k+z)\JS(m-1,k;z)y^k.
\end{align*}
Thus the coefficient of $y^j$ in $J_{m-1}(z,y)J_{n+1}(z,y)-J_{m}(z,y)J_{n}(z,y)$ is
\begin{equation}\label{exp:JS}
\sum_{i=0}^j [(j-i)(j-i+z)-i(i+z)]\JS(n,j-i;z)\JS(m-1,i;z).
\end{equation}
By Theorem~\ref{th:ja}~(i), the sequence $\{\JS(n,k;z)\}_{k=0}^n$ is strongly $z$-log-concave. It follows from \eqref{eq:ja} and 
Lemma~\ref{le: 2} that the expression in~\eqref{exp:JS} is nonnegative with respect to $\geq_z$ if $n\geq m\geq1$, 
which proves  (i).

(ii) 
By Eq.~\eqref{JS:first}, we have
$\sum_{k=0}^n \js(n,k;z)y^k=\prod_{i=0}^{n-1}(y+i(z+i))$.
The result can be  verified directly from this simple expression. \qed
\medskip

Recall that 
the {\em Stirling numbers of the second kind} $S(n,k)$ are defined by  the following recurrence relation 
\begin{equation*}
S(n,k)=S(n-1,k-1)+kS(n-1,k)
\end{equation*}
with $S(0,0)=1$. 
Let $B_n(y)=\sum_{k=0}^nS(n,k)y^k$ be the $n$-th Bell polynomial.
We show that Theorem~\ref{bell:JS}~(i)
 implies the following result of Chen et al.~\cite{CWY11}.  
\begin{corollary}
The Bell polynomials
$\{B_n(y)\}_{n\geq0}$ are strongly $y$-log-convex.
\end{corollary}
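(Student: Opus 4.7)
The plan is to deduce the Corollary from Theorem~\ref{bell:JS}(i) by realising the Bell polynomials as a scaling limit of the polynomials $J_n(z,y):=\sum_{k=0}^n \JS(n,k;z)y^k$ used in the proof of that theorem. The crucial ingredient is that the Stirling numbers of the second kind arise as the leading $z$-coefficients of the Jacobi-Stirling numbers.

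More precisely, an induction on $n$ using the recurrence~\eqref{eq:ja} shows that for $n\geq k\geq 1$, $\JS(n,k;z)$ is a polynomial in $z$ of degree $n-k$ whose leading coefficient equals $S(n,k)$: writing $\JS(n,k;z)=c(n,k)z^{n-k}+(\text{lower order in }z)$, the recurrence~\eqref{eq:ja} yields $c(n,k)=c(n-1,k-1)+k\,c(n-1,k)$ with $c(0,0)=1$, which is precisely the defining recurrence for $S(n,k)$. As a consequence,
$$
z^{-n}J_n(z,yz)\;=\;\sum_{k=0}^n \JS(n,k;z)\,z^{k-n}\,y^k\;\xrightarrow{z\to\infty}\;\sum_{k=0}^n S(n,k)y^k \;=\; B_n(y).
$$

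Next I would apply Theorem~\ref{bell:JS}(i) to the polynomial
$$
\Delta_{m,n}(z,y)\;:=\;J_{m-1}(z,y)J_{n+1}(z,y)-J_m(z,y)J_n(z,y),
$$
which lies in $\R_+[z,y]$. Writing $J_k(z,y)=\sum_{i,j}c^{(k)}_{ij}z^iy^j$, the degree statement of the previous paragraph forces $c^{(k)}_{ij}=0$ whenever $i+j>k$; hence every monomial $z^iy^j$ appearing in $\Delta_{m,n}$ with nonzero coefficient satisfies $i+j\leq m+n$. After substituting $y\mapsto yz$ and dividing by $z^{m+n}$, only the monomials with $i+j=m+n$ survive in the limit $z\to\infty$, so $\lim_{z\to\infty}z^{-(m+n)}\Delta_{m,n}(z,yz)$ is a polynomial in $y$ with nonnegative coefficients. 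On the other hand, by the factorisation of the product and the limit formula in the previous paragraph, this same limit equals $B_{m-1}(y)B_{n+1}(y)-B_m(y)B_n(y)$; hence the latter difference lies in $\R_+[y]$, which is strong $y$-log-convexity.

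The only genuinely non-routine step is the identification of $S(n,k)$ as the leading $z$-coefficient of $\JS(n,k;z)$; once this is in hand the remainder is elementary degree bookkeeping, and I do not anticipate any further obstacle.
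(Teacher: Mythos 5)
Your proof is correct and is essentially the paper's own argument: both rest on Theorem~\ref{bell:JS}(i) together with the fact that $S(n,k)$ is the leading coefficient of the degree-$(n-k)$ polynomial $\JS(n,k;z)$, and both identify $B_{m-1}(y)B_{n+1}(y)-B_m(y)B_n(y)$ with the top homogeneous slice (the coefficients of $z^{m+n-i}y^i$) of the nonnegative polynomial $J_{m-1}J_{n+1}-J_mJ_n$. The paper reads off these coefficients directly while you package the same extraction as a $z\to\infty$ scaling limit, a purely cosmetic difference.
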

\begin{proof} 
By Theorem~\ref{bell:JS}~(i),  the sequence $\{J_n(z,y)\}_n$ is strongly $\{z,y\}$-log-convex, namely, the polynomial
\begin{align}\label{note}
J_{m-1}(z,y)J_{n+1}(z,y)-J_{m}(z,y)J_{n}(z,y)
\end{align}
has nonnegative coefficients.
It is known (see \cite{gz}) that $\JS(n,k;z)$ is a polynomial in $z$ of 
degree $n-k$  with leading coefficient $S(n,k)$. 
Hence  the coefficient of $z^{n-k}y^k$ in $J_n(z,y)$ is 
$S(n,k)$, which implies that 
 the coefficient of $z^{m+n-i}y^i$ in \eqref{note}
 is equal to 
that of $y^i$ in $B_{m-1}(y)B_{n+1}(y)-B_m(y)B_n(y)$ for $0\leq i\leq m+n$. 
This completes the proof of the desired result.
\end{proof}

\subsection{An open problem}
We say that a transformation of sequences $\{z_n\}_n\mapsto \{w_n\}_n$  preserves the log-convexity if the log-convexity of  $\{z_n\}_{n\geq0}$ implies that of $\{w_n\}_{n\geq0}$.
For example, 
Liu and Wang~\cite{LW10} show that the Stirling  transformation
 $w_n=\sum_{k=0}^nS(n,k)z_k$ preserves the log-convexity.
 In view of Theorem~\ref{bell:JS}, we pose the following conjecture.
 \begin{Conjecture}
 The Jacobi-Stirling transformation: $\{z_n\}_n\mapsto \{w_n\}_n$, where
 $$w_n=\sum_{k=0}^n\JS(n,k;z)z_k\quad \text{or}\quad 
 w_n=\sum_{k=0}^n\js(n,k;z)z_k,
 $$ 
 preserves the log-convexity for $z=0, 1$.
 \end{Conjecture}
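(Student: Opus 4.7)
The plan is to adapt the inductive technique used by Liu and Wang \cite{LW10} to establish that the classical Stirling transformation $w_n = \sum_k S(n,k) z_k$ preserves log-convexity. Focusing first on the second-kind Jacobi-Stirling transform $w_n = \sum_k \JS(n,k;z) z_k$ with $z \in \{0,1\}$, one uses the column recurrence \eqref{eq:ja} to split $w_{n+1} = u_n + v_n$, with $u_n := \sum_k \JS(n,k;z) z_{k+1}$ and $v_n := \sum_k k(k+z)\JS(n,k;z) z_k$. The sequence $\{u_n\}$ is the Jacobi-Stirling transform of the shifted sequence $\{z_{k+1}\}$, which remains log-convex; $\{v_n\}$ is the transform of $\{k(k+z) z_k\}$. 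I would conduct a strong induction on $n$ proving simultaneously the log-convexity of $\{w_n\}$, $\{u_n\}$ and $\{v_n\}$, together with cross-inequalities of the form $u_{n-1} v_{n+1} + u_{n+1} v_{n-1} \geq 2 u_n v_n$ and similar couplings; these joint inequalities are needed precisely because the sum of two log-convex sequences is not automatically log-convex.

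The natural tool for controlling the cross-terms should be the polynomial identity afforded by Theorem \ref{bell:JS}: the difference
\[J_{n-1}(z,y) J_{n+1}(z,y) - J_n(z,y)^2 \in \R_+[z,y]\]
has nonnegative coefficients when expanded in powers of $y$. To bring this into contact with the bilinear form $w_{n-1} w_{n+1} - w_n^2 = \sum_{i,j}[\JS(n-1,i;z)\JS(n+1,j;z) - \JS(n,i;z)\JS(n,j;z)] z_i z_j$, one regroups the $z_i z_j$ monomials along the antidiagonals $i + j = s$ and uses the log-convexity relation $z_i z_j \leq z_{i-1} z_{j+1}$ (for $i \leq j$) to transfer weight from near-diagonal pairs to extreme pairs, matching the positivity pattern coming from Theorem \ref{bell:JS}. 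Auxiliary inputs would be the $z$-total positivity of $(\JS(n,k;z-1))_{n,k}$ (Theorem \ref{total:jc}) and the strong $z$-log-concavity of its rows (Theorem \ref{th:ja}(i)), both of which help pin down signs in the coefficient matrix, and the abstract framework of Lemma \ref{le: 2}, which is already tailored for this kind of bookkeeping.

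The main obstacle is that the multiplier $k(k+z)$ is log-\emph{concave} in $k$, not log-convex, so $\{k(k+z) z_k\}$ may itself fail to be log-convex when $\{z_k\}$ is; this blocks any naive ``multiply then add'' argument for $\{v_n\}$ and forces the proof to exploit the joint structure of $\JS(n,k;z)$ in both indices simultaneously, likely through the full strength of Theorem \ref{bell:JS}. The restriction $z \in \{0,1\}$ appears essential because at these values $k(k+z)$ is a nonnegative integer and $\JS(n,k;z)$ admits combinatorial interpretations (central factorial numbers at $z = 0$, Legendre-Stirling numbers at $z = 1$) that should supply the extra integrality and injection arguments needed. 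For the first-kind transform $w_n = \sum_k \js(n,k;z) z_k$ an alternative route opens up via the factorization $\sum_k \js(n,k;z) y^k = \prod_{i=1}^{n-1}(y + i(i+z))$ observed in the proof of Theorem \ref{bell:JS}(ii): in operator form, $w_n$ arises from iteratively applying the shift-plus-scalar operators $L + i(i+z) I$, each of which can be checked by a direct calculation to preserve log-convexity of a sequence in its own index; the remaining difficulty is converting this ``row-wise'' preservation into log-convexity of $\{w_n\}$ indexed by $n$, a transposition that I expect to be more accessible than the second-kind case but still to require nontrivial combinatorial input at $z = 0, 1$.
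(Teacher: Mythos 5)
There is a fundamental mismatch here: the statement you are proving is posed in the paper as an open \emph{conjecture} --- the paper offers no proof of it --- and your proposal does not close that gap. What you have written is a strategy outline, not a proof: the joint induction on $\{w_n\}$, $\{u_n\}$, $\{v_n\}$ is announced but the cross-inequalities $u_{n-1}v_{n+1}+u_{n+1}v_{n-1}\geq 2u_nv_n$ are never derived; you yourself observe that $k(k+z)$ is log-concave in $k$, so $\{k(k+z)z_k\}$ need not be log-convex and the induction hypothesis for $\{v_n\}$ cannot even be formulated in the form you propose; and for the first-kind transform the ``transposition'' from the row-wise factorization $\sum_k\js(n,k;z)y^k=\prod_{i=0}^{n-1}(y+i(i+z))$ to log-convexity in $n$ is explicitly left open.

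The most concrete gap is in the step where you try to leverage Theorem~\ref{bell:JS}. Writing $w_{n-1}w_{n+1}-w_n^2=\sum_s\sum_{i+j=s}c^{(n)}_{i,j}z_iz_j$ with $c^{(n)}_{i,j}=\JS(n-1,i;z)\JS(n+1,j;z)-\JS(n,i;z)\JS(n,j;z)$, Theorem~\ref{bell:JS} gives you only that each full antidiagonal sum $\sum_{i+j=s}c^{(n)}_{i,j}$ is nonnegative (this is the coefficient of $y^s$ in $J_{n-1}J_{n+1}-J_n^2$, i.e.\ the special case $z_k=y^k$). To handle an arbitrary log-convex $\{z_k\}$ by the Abel-summation/weight-transfer argument you sketch, one needs the stronger statement that the \emph{partial} sums of the symmetrized coefficients $c^{(n)}_{r,s-r}+c^{(n)}_{s-r,r}$, accumulated from the extremes of each antidiagonal inward, are all nonnegative. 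That family of inequalities is strictly stronger than what Theorem~\ref{bell:JS} asserts, is not supplied by Theorem~\ref{th:ja} or Theorem~\ref{total:jc} either, and is essentially the real content of the conjecture. Until it (or some substitute) is established, the argument does not go through for either kind of Jacobi-Stirling transform.
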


\subsection{Proof of Theorem~\ref{convex}}
\label{convex:rama}
By  \eqref{def:Q}
we see that $Q_n(x,y,z,t)$ are homogeneous polynomials in $x,y,z,t$ of degree $n-1$.
As $z$ is just a homogeneous parameter, namely,
\begin{equation}\label{hom:Rama}
Q_n(x,y,z,t)=z^{n-1}Q_n(x/z,y/z,1,t/z),
\end{equation}
it suffices to study $Q_n(x,y,1,t)$. We set
\begin{equation}\label{poq}
Q_n(x,y,1,t)=\sum_{k=0}^{n-1}Q_{n,k}(x,t)y^k.
\end{equation}
Substituting \eqref{poq} in \eqref{def:Q} and  identifying the coefficients of $y^k$  we obtain $Q_{1,0}(x,t)=1$ and for $n\geq2$:
\begin{equation}\label{Qnk}
Q_{n,k}(x,t)=[x+n-1+t(n+k-1)]Q_{n-1,k}(x,t)+(n+k-2)Q_{n-1,k-1}(x,t),
\end{equation}
where $Q_{n,k}(x,t)=0$ if $k\geq n$ or $k<0$.

\begin{lemma} \label{le:1}
 For $n\geq 1$ and $l\geq k\geq1$, we have 
 $$
 Q_{n,k}(x,t)Q_{n,l}(x,t)\geq_{{\bf x}}Q_{n,k-1}(x,t)Q_{n,l+1}(x,t),
 $$
 where $\x=\{x,t\}$.
 In other words, the polynomial sequence $\{Q_{n,k}(x,t)\}_{k=0}^{n-1}$ is strongly ${\bf x}$-log-concave.
 \end{lemma}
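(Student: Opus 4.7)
The plan is to induct on $n$, using the recurrence~\eqref{Qnk}. The base case $n=1$ is trivial since the sequence has a single term. For the inductive step, assume the statement holds for $n-1$, i.e.,
\begin{equation*}
Q_{n-1,i}(x,t)\,Q_{n-1,j}(x,t)\geq_{\mathbf{x}} Q_{n-1,i-1}(x,t)\,Q_{n-1,j+1}(x,t)\quad \text{for all } 1\le i\le j\le n-2.
\end{equation*}

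To avoid clutter, write $a_{n,k}=x+n-1+t(n+k-1)$ and $b_{n,k}=n+k-2$, so that \eqref{Qnk} reads $Q_{n,k}=a_{n,k}Q_{n-1,k}+b_{n,k}Q_{n-1,k-1}$. Expanding $Q_{n,k}Q_{n,l}-Q_{n,k-1}Q_{n,l+1}$ via this recurrence produces eight monomial terms, each a nonnegative scalar multiple of one of the six products $Q_{n-1,i}Q_{n-1,j}$ with $(i,j)\in\{(k,l),(k,l-1),(k-1,l),(k-1,l-1),(k-1,l+1),(k-2,l+1),(k-2,l)\}$.

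The key idea is then to regroup these eight terms as a nonnegative combination of the four ``inductive differences''
\begin{align*}
&Q_{n-1,k}Q_{n-1,l}-Q_{n-1,k-1}Q_{n-1,l+1},\quad Q_{n-1,k}Q_{n-1,l-1}-Q_{n-1,k-1}Q_{n-1,l},\\
&Q_{n-1,k-1}Q_{n-1,l}-Q_{n-1,k-2}Q_{n-1,l+1},\quad Q_{n-1,k-1}Q_{n-1,l-1}-Q_{n-1,k-2}Q_{n-1,l},
\end{align*}
plus a remainder consisting of positive monomials in $x,t$ times products $Q_{n-1,i}Q_{n-1,j}$. A short verification shows that the following three scalar identities suffice:
\begin{align*}
a_{n,k}a_{n,l}-a_{n,k-1}a_{n,l+1}&=t^2(l-k+1),\\
b_{n,k}b_{n,l}-b_{n,k-1}b_{n,l+1}&=l-k+1,\\
a_{n,k}b_{n,l}+b_{n,k}a_{n,l}-a_{n,k-1}b_{n,l+1}-b_{n,k-1}a_{n,l+1}&=2t(l-k+1).
\end{align*}
Using these, the expansion reorganizes as
\begin{align*}
Q_{n,k}Q_{n,l}-Q_{n,k-1}Q_{n,l+1}
&=a_{n,k-1}a_{n,l+1}\bigl(Q_{n-1,k}Q_{n-1,l}-Q_{n-1,k-1}Q_{n-1,l+1}\bigr)\\
&\quad+a_{n,k}b_{n,l}\bigl(Q_{n-1,k}Q_{n-1,l-1}-Q_{n-1,k-1}Q_{n-1,l}\bigr)\\
&\quad+b_{n,k-1}a_{n,l+1}\bigl(Q_{n-1,k-1}Q_{n-1,l}-Q_{n-1,k-2}Q_{n-1,l+1}\bigr)\\
&\quad+b_{n,k-1}b_{n,l+1}\bigl(Q_{n-1,k-1}Q_{n-1,l-1}-Q_{n-1,k-2}Q_{n-1,l}\bigr)\\
&\quad+(l-k+1)\Bigl(t^2\,Q_{n-1,k}Q_{n-1,l}+2t\,Q_{n-1,k-1}Q_{n-1,l}+Q_{n-1,k-1}Q_{n-1,l-1}\Bigr).
\end{align*}
Every coefficient on the right lies in $\R_+[x,t]$, and each of the four parenthesized differences is $\geq_{\mathbf{x}} 0$ by the inductive hypothesis; the remainder is a sum of products of nonnegative polynomials.

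The boundary cases require a brief separate check: when $k=1$ the term $Q_{n-1,k-2}=Q_{n-1,-1}$ vanishes, and when $l=n-1$ the term $Q_{n-1,l+1}=Q_{n-1,n}$ vanishes, so the corresponding differences are either trivially zero or reduce to single nonnegative products. The main obstacle is discovering the correct decomposition above: once one tries to match coefficients, there is essentially no freedom in the choice of the four multipliers, and the three scalar identities above are exactly what is needed for the residual monomials to have nonnegative coefficients in $x$ and $t$.
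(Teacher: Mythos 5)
Your proof is correct and follows essentially the same route as the paper's: induct on $n$, expand $Q_{n,k}Q_{n,l}-Q_{n,k-1}Q_{n,l+1}$ via the recurrence \eqref{Qnk}, and regroup the eight resulting terms into nonnegative multiples of level-$(n-1)$ differences covered by the induction hypothesis plus an explicitly nonnegative remainder proportional to $l-k+1$. The paper's regrouping (its $A_n,B_n,C_n,D_n$) attaches the multipliers $a_{n,k}a_{n,l}$ and $b_{n,k}b_{n,l}$ to the first and fourth differences where you use $a_{n,k-1}a_{n,l+1}$ and $b_{n,k-1}b_{n,l+1}$, so the remainder terms differ, but both decompositions rest on the same telescoping identities for the coefficients and both are valid.
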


\begin{proof}  Let 
$$
U_n(k,l)=Q_{n,k}(x,t)Q_{n,l}(x,t).
$$
 We prove by induction on $n\geq 1$. For $n=1, 2$ the inequality is trivial.
For  $n=3$,  we have
\begin{align*}
U_3(1,1) - U_3(0,2)
=6x^2+15x+10+21tx+28t+19t^2\geq_{{\bf x}}0.
\end{align*}

Using recurrence relation \eqref{Qnk} we can write 
$$
U_{n+1}(k,l)- U_{n+1}(k-1,l+1)=A_n+B_n+C_n+D_n,
$$
where 
\begin{align*} 
A_n=&\bigl(x+n+t(n+k)\bigr)\bigl(x+n+t(n+l)\bigr)\bigl[U_n(k,l)-U_n(k-1,l+1)\bigr],\\
B_n=&(n+k-1)(n+l-1)\bigl[ U(k-1,l-1)-U_n(k-2,l)\bigr],\\
C_n=&(x+n)(l-k+1)\bigl[U_n(k,l-1)-U_n(k-1,l)\bigr]\\
&+\bigl(x+n+t(n+l+1)\bigr)(n+k-2)\bigl[U_n(k,l-1)-U_n(k-2,l+1)\bigr],\\
D_n=&(l-k+1)\bigl[t^2U_n(k-1,l+1)+U_n(k-2,l)+2U_n(k,l-1)\bigr].
\end{align*}
By induction hypothesis, the polynomials  $A_n, B_n, C_n$ and $D_n$ are clearly 
nonnegative with respect to $\geq_\x$. This completes the proof.
\end{proof}
 By~\eqref{hom:Rama}, it suffices to prove Theorem~\ref{convex} for the polynomial sequence $\{Q_n(x,y,1,t)\}_{n\geq0}$.
For brevity, we write $Q_{n,k}$ for $Q_{n,k}(x,t)$, $Q_n$ for $Q_n(x,y,1,t)$ and $Q'_{n}$ for $\partial_y Q_{n}(x,y,1,t)$.  

 By recurrence relation~\eqref{def:Q}, we have
\begin{align*}
Q_{m-1}Q_{n+1}-Q_mQ_n
=(n-m+1)(y+t+1)Q_{m-1}Q_{n}+y(y+t)(Q'_{n}Q_{m-1}-Q_{n}Q'_{m-1}).
\end{align*}
Thus, the strong ${\bf x}$-log-convexity of $\{Q_n\}_{n\geq0}$ will follow from the claim that, for all $n\geq m\geq1$, 
$$
Q'_{n}Q_{m-1}-Q_{n}Q'_{m-1}\geq_{{\bf x}}0,
$$ 
where  $\x=\{x,y,t\}$. The coefficient of $y^{j-1}$ in $Q'_{n}Q_{m-1}-Q_{n}Q'_{m-1}$ is
\begin{align}\label{ram:con}
\sum_{i=0}^{j}\biggl((j-i)Q_{n,j-i}Q_{m-1,i}-iQ_{n,j-i}Q_{m-1,i}\biggr)
=\sum_{i=0}^{j}[(j-i)-i]Q_{n,j-i}Q_{m-1,i}.
\end{align}
By Lemmas~\ref{le:1}, the polynomial sequence $\{Q_{n,k}(x,t)\}_{k=0}^{n-1}$ is strongly ${\bf x}$-log-concave.
 It follows from \eqref{Qnk} and Lemma~\ref{le: 2} that the right-hand side of~\eqref{ram:con} is nonnegative with respect to $\geq_{\x}$. So the claim is true.
 This completes the proof of Theorem~\ref{convex}.
\qed

\section{On a sequence arising from Lambert $W$ function}
\label{lambert}

In  \cite{KJ10} Kalugin and Jeffrey consider another form of Lambert's equation
$we^w=x$.  Differentiating $n$ times $w$ they obtain 
\[
\frac{d^nw(x)}{dx^n}=\frac{e^{-nw(x)} p_n(w(x))}{(1+w(x))^{2n-1}},
\]
where $p_n(x)$ are polynomials that satisfy $p_1(x)=1$ and the recurrence relation
\begin{align}\label{eq:pn2}
p_{n+1}(x)=-(nx+3n-1)p_n(x)+(1+x)p_n'(x), \quad n\geq 1.
\end{align}
Based on the above recurrence, Kalugin and Jeffrey \cite{KJ10} prove  that the  coefficients of $(-1)^{n-1}p_n(x)$  are positive and 
form a unimodal sequence.
 In what follows, we show how this result follows easily from a connection with  the Ramanujan polynomials $R_n$.

\begin{proposition}\label{th:pn} We have
\begin{align*}
(-1)^{n-1}p_n(x)=(1+x)^{n-1}R_n(1/(1+x)).
\end{align*}
\end{proposition}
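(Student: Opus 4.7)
The plan is a direct induction on $n$, comparing recurrences on both sides. Set $q_n(x) := (-1)^{n-1} p_n(x)$ and $\tilde R_n(x) := (1+x)^{n-1} R_n\!\left(\tfrac{1}{1+x}\right)$. Both equal $1$ at $n=1$, so it suffices to show they satisfy the same first-order recurrence.

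First, I would rewrite \eqref{eq:pn2} in terms of $q_n$. Multiplying by $(-1)^n$ gives
$$q_{n+1}(x) = (nx + 3n - 1)\,q_n(x) - (1+x)\,q_n'(x),$$
which is immediate.

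The main (and only nontrivial) step is to derive the same recurrence for $\tilde R_n$ from \eqref{def:R}. Setting $y = 1/(1+x)$, so that $1+y = (2+x)/(1+x)$, $y^2 = 1/(1+x)^2$, and $dy/dx = -1/(1+x)^2$, a straightforward differentiation gives
$$\tilde R_n'(x) = (n-1)(1+x)^{n-2} R_n(y) - (1+x)^{n-3} R_n'(y),$$
so $R_n'(y) = (n-1)(1+x)\,R_n(y) - (1+x)^{3-n}\,\tilde R_n'(x)$. Substituting the recurrence \eqref{def:R} into $(1+x)^n R_{n+1}(y)$ and simplifying yields
$$\tilde R_{n+1}(x) = n(2+x)\,\tilde R_n(x) + (n-1)\,\tilde R_n(x) - (1+x)\,\tilde R_n'(x) = (nx+3n-1)\,\tilde R_n(x) - (1+x)\,\tilde R_n'(x),$$
which matches the recurrence for $q_n$. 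Combined with $q_1 = \tilde R_1 = 1$, induction completes the proof.

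The only potential pitfall is bookkeeping the signs and the powers of $(1+x)$ in the chain-rule computation; the rest is mechanical. Once the two recurrences are seen to coincide, the identity falls out immediately.
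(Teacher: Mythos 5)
Your proof is correct, and it is essentially the same argument as the paper's: both verify that $(1+x)^{n-1}R_n(1/(1+x))$ satisfies the recurrence \eqref{eq:pn2} (up to the sign $(-1)^{n-1}$) by substituting $y=1/(1+x)$ into \eqref{def:R} and using the chain rule to trade $R_n'(y)$ for the $x$-derivative, then conclude by induction from the common initial value at $n=1$. The only difference is bookkeeping: you strip the sign off $p_n$ first and compare two sign-free recurrences, while the paper carries $(-1)^{n-1}$ inside its definition of $q_n$.
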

\begin{proof}Let 
\begin{align}\label{eq:q}
(-1)^{n-1}q_n(x)=(1+x)^{n-1}R_n(1/(1+x)).
\end{align}
and  substituting \eqref{eq:q} into \eqref{def:R} we get 
\begin{align}\label{eq:dq}
q_{n+1}(x)=-n(2+x)q_n(x)+(-1)^n(1+x)^{n-2}R_n'(1/(1+x)).
\end{align}
Now, differentiating   the   Eq.~\eqref{eq:q}  yields 
$$
(-1)^n(1+x)^{n-2}R_n'(1/(1+x))=(1+x)q_n(x)'-(n-1)q_n(x).
$$
Substituting this into \eqref{eq:dq} we see  that $q_n(x)$ satisfy recurrence \eqref{eq:pn2}. As $q_1(x)=p_1(x)$, we have  $q_n(x)=p_n(x)$ for $n\geq 1$.
\end{proof}

A sequence $a_0, a_1, \ldots, a_n$ of real numbers is said to have  no internal zeros if there do not exist  integers $0\leq i<j<k\leq n$ satisfying $a_i\neq 0$, $a_j=0$ and $a_k\neq 0$.
The following result is  known, see~\cite[Theorem 2.5.3]{Brenti89} or~\cite[Theorem 2]{Hoggar74}.
\begin{lemma}\label{th:pf} 
 If the coefficients of the polynomial $A(x)$ are nonnegative without internal zeros and log-concave, then so are the coefficients of  $A(x+1)$.
\end{lemma}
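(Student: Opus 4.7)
The plan is to expand $A(x+1)$ using the binomial theorem and then verify the three required properties of the new coefficient sequence. Write $A(x)=\sum_{i=0}^d a_i x^i$, so that
\[
A(x+1)=\sum_{k=0}^d b_k x^k, \qquad b_k=\sum_{i\ge k}\binom{i}{k}a_i.
\]
Nonnegativity of every $b_k$ is immediate from $a_i\ge 0$ and $\binom{i}{k}\ge 0$. For the no-internal-zeros property, I would argue that if $b_{j}>0$ and $b_{\ell}>0$ for some $j<\ell$, then positivity of $b_\ell$ forces some $a_i>0$ with $i\ge \ell$; for any intermediate index $m$ with $j<m<\ell$, the binomial $\binom{i}{m}$ is strictly positive, so that single term already forces $b_m>0$.

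The heart of the argument is the log-concavity of $(b_k)$. I would begin from the identity
\[
b_k^2-b_{k-1}b_{k+1}=\sum_{i,j}\Bigl[\binom{i}{k}\binom{j}{k}-\binom{i}{k-1}\binom{j}{k+1}\Bigr]a_ia_j,
\]
and symmetrize in $(i,j)$ to rewrite it as $\tfrac12\sum_{i,j}M_k(i,j)\,a_ia_j$, where
\[
M_k(i,j)=2\binom{i}{k}\binom{j}{k}-\binom{i}{k-1}\binom{j}{k+1}-\binom{j}{k-1}\binom{i}{k+1}.
\]
One then pairs the terms $(i,j)$ and $(i-1,j+1)$ for $i\le j$ and exploits the log-concavity $a_ia_j\ge a_{i-1}a_{j+1}$ (valid whenever no internal zero intervenes between the relevant indices) together with elementary algebraic identities relating the binomial coefficients appearing in $M_k$. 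A conceptually cleaner alternative is to invoke total positivity of the Pascal matrix $\bigl(\binom{i}{k}\bigr)_{i,k\ge 0}$: the condition that $(a_i)$ is nonnegative, log-concave, without internal zeros, is equivalent to nonnegativity of all $2\times 2$ minors of the Toeplitz matrix $(a_{i-j})$, and this property is preserved when the source sequence is filtered through a totally positive kernel, by a standard composition theorem in the style of Karlin.

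The principal obstacle is executing either route cleanly. The direct manipulation of $M_k(i,j)$ demands a careful case analysis according to the relative sizes of $i$, $j$, and $k$, and according to where the zeros of $(a_i)$ sit, so that the grouping of terms corresponding to the log-concavity inequality $a_ia_j\ge a_{i-1}a_{j+1}$ actually produces a nonnegative sum; the abstract total-positivity route, on the other hand, rests on a nontrivial preservation theorem for $\mathrm{TP}_2$ under composition. Since the statement is classical, for a full proof I would defer to Brenti~\cite[Theorem 2.5.3]{Brenti89} or Hoggar~\cite[Theorem 2]{Hoggar74}.
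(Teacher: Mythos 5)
You should know at the outset that the paper offers no proof of this lemma: it is stated as a known result with pointers to \cite[Theorem~2.5.3]{Brenti89} and \cite[Theorem~2]{Hoggar74}, which is exactly where your proposal ends up as well. The parts you do carry out are correct and complete: the expansion $b_k=\sum_{i\ge k}\binom{i}{k}a_i$, the nonnegativity of the $b_k$, and the no-internal-zeros argument (a single index $i\ge\ell$ with $a_i>0$ already forces $b_m\ge\binom{i}{m}a_i>0$ for every $m\le i$).

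The log-concavity of $(b_k)$, however, is the entire content of the lemma, and it is left unproved; this is a genuine gap if the intent was to supply a proof rather than a citation. Your first route stops precisely at the hard point: after symmetrizing to $\tfrac12\sum_{i,j}M_k(i,j)a_ia_j$ one must exhibit an explicit nonnegative decomposition into multiples of the differences $a_ia_j-a_{i-1}a_{j+1}$, which amounts to showing that certain partial sums of $M_k(i,j)$ along the anti-diagonals $i+j=\text{const}$, taken from the ends inward, are nonnegative; no such identity is verified, and this combinatorial inequality is where all the work lies. Your second route is stated too loosely to be checkable: writing $b_k=\sum_i\binom{i}{k}a_i$ expresses the \emph{sequence} $(b_k)$ as the image of $(a_i)$ under the Pascal kernel, but what must be shown $\mathrm{TP}_2$ is the \emph{Toeplitz} matrix $(b_{j-i})$, and that matrix does not factor as (Pascal kernel) times (Toeplitz matrix of $a$) in a form to which the Cauchy--Binet/basic composition formula applies directly; the preservation statement you invoke would itself need proof. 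Since the paper simply cites Brenti and Hoggar, deferring to those references is consistent with its treatment, but as a self-contained argument the proposal does not close.
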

From the above proposition and lemma we can derive a neat  proof of the following result of Kalugin and Jeffrey~\cite{KJ10}. 
\begin{corollary}\label{cokj}
The coefficients of the polynomial $(-1)^{n-1}p_n(x)$ are positive, log-concave, and unimodal.
\end{corollary}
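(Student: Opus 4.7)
My plan is to transfer the problem to the Ramanujan polynomials through Proposition~\ref{th:pn} and then apply Lemma~\ref{th:pf}. Writing $R_n(y)=\sum_{k=0}^{n-1}r_{n,k}y^k$ with all $r_{n,k}>0$, I would introduce the reverse polynomial
\[
A_n(u):=u^{n-1}R_n(1/u)=\sum_{k=0}^{n-1}r_{n,n-1-k}\,u^k,
\]
so that Proposition~\ref{th:pn} rephrases as $(-1)^{n-1}p_n(x)=A_n(1+x)$. Since reversing a sequence preserves positivity, absence of internal zeros, and log-concavity, the problem reduces to proving that the coefficient sequence $(r_{n,0},\ldots,r_{n,n-1})$ of $R_n(y)$ is log-concave: then Lemma~\ref{th:pf} applied to $A_n$ will transport this property to $A_n(x+1)=(-1)^{n-1}p_n(x)$, and positivity together with log-concavity and no internal zeros forces unimodality.

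The core step is to prove log-concavity of $(r_{n,k})_{k}$ by induction on $n$. Extracting the coefficient of $y^k$ from the recurrence~\eqref{def:R} yields
\[
r_{n+1,k}=n\,r_{n,k}+(n+k-1)\,r_{n,k-1}.
\]
I would expand $r_{n+1,k}^2-r_{n+1,k-1}r_{n+1,k+1}$ using this formula. The expansion produces four contributions: a nonnegative piece $n^2\bigl(r_{n,k}^2-r_{n,k-1}r_{n,k+1}\bigr)$ from the induction hypothesis; a piece $(n+k-1)^2 r_{n,k-1}^2-(n+k-2)(n+k)r_{n,k-2}r_{n,k}$ which is nonnegative since $(n+k-1)^2\ge(n+k-2)(n+k)$ and the induction hypothesis at position $k-1$ gives $r_{n,k-1}^2\ge r_{n,k-2}r_{n,k}$; a positive cross term $n(n+k-2)\,r_{n,k-1}r_{n,k}$; and a single negative term $-n(n+k-2)\,r_{n,k-2}r_{n,k+1}$. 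The crucial observation is that, as recorded in the Remark of Section~1, log-concavity of a positive real sequence is equivalent to strong log-concavity $r_{n,i}r_{n,j}\ge r_{n,i-1}r_{n,j+1}$ for $i\le j$; applied with $i=k-1,\,j=k$ this yields $r_{n,k-2}r_{n,k+1}\le r_{n,k-1}r_{n,k}$, which exactly absorbs the negative term into the positive cross term. The base cases $n=1,2$ are immediate.

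The main obstacle is precisely this induction step: one has to recognise that the \emph{strong} form of log-concavity, rather than bare log-concavity at a single index, is what kills the one negative contribution in the expansion. Once this observation is in place, the rest of the corollary is just reading off positivity, applying Lemma~\ref{th:pf} to $A_n$, and noting that a log-concave positive sequence is automatically unimodal.
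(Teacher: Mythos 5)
Your proof is correct, and its overall skeleton is the same as the paper's: reduce to log-concavity of the coefficient sequence of $R_n(y)$, then transfer to $(-1)^{n-1}p_n(x)$ via Proposition~\ref{th:pn} together with Lemma~\ref{th:pf} (noting that reversal and unimodality of positive log-concave sequences come for free). The one place where you diverge is the justification of the key fact that the coefficients of $R_n(y)$ are log-concave. The paper obtains this as a specialization: by \eqref{spec:ramanu:} and \eqref{poq}, the coefficients of $R_n(y)$ are $Q_{n,k}(0,0)$, and Lemma~\ref{le:1} already establishes strong $\{x,t\}$-log-concavity of $\{Q_{n,k}(x,t)\}_k$, so evaluating at $x=t=0$ finishes the job. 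You instead reprove the statement from scratch by extracting the coefficient recurrence $r_{n+1,k}=n\,r_{n,k}+(n+k-1)\,r_{n,k-1}$ from \eqref{def:R} and running a direct induction; your expansion of $r_{n+1,k}^2-r_{n+1,k-1}r_{n+1,k+1}$ is correct, the identity $2n(n+k-1)-n(n+k)=n(n+k-2)$ does isolate exactly one negative term, and invoking strong log-concavity (equivalent to log-concavity for a positive real sequence) at indices $(k-1,k)$ kills it. Your argument is more self-contained and elementary --- it does not depend on the machinery of Section~\ref{stro:log-conv} --- at the cost of redoing an induction that the paper gets for free from a lemma it needs anyway for Theorem~\ref{convex}. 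Both are valid; in the context of this paper the specialization is the more economical choice, but your version would stand alone.
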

\begin{proof} 
First, by \eqref{def:R} it is clear that $R_n(y)$ is a polynomial in $y$ with positive coefficients.
By  \eqref{spec:ramanu:},  \eqref{poq}  and Lemma~\ref{le:1}, we see that the coefficients of $R_n(y)$ are  log-concave. 
 Combining these  with Proposition~\ref{th:pn} and Lemma~\ref{th:pf}, we derive that the coefficients of polynomials  $(-1)^{n-1}p_n(x)$ are positive and log-concave.
 Since a log-concave positive sequence is unimodal, we are done.
\end{proof}

\begin{remark}
Kalugin and Jeffrey~\cite{KJ10} proved Corollary~\ref{cokj} through a long discussion based on  recurrence \eqref{eq:pn2}. 
\end{remark}

\subsection*{Acknowledgments} We thank the referee for a careful reading of the manuscript. The first author was supported by the China Scholarship Council (CSC) for studying abroad. This work was also supported by CMIRA COOPERA 2012 de  la R\'egion Rh\^one-Alpes.

\end{document}